   \definecolor{cites}{rgb}{0.75 , 0.00 , 0.00}  
   \definecolor{urls} {rgb}{0.00 , 0.00 , 1.00}  
   \definecolor{links}{rgb}{0.00 , 0.00 , 0.5}   
  \definecolor{gray}{rgb}{0.5,0.5,.5}
\newcommand{\C}{\mathbb{C}}
\newcommand{\N}{\mathbb{N}}
\newcommand{\1}{\mathbbm{1}}
\newcommand{\Ac}{\mathcal{A}}
\newcommand{\Bc}{\mathcal{B}}
\newcommand{\Fc}{\mathcal{F}}
\newcommand{\Kc}{\mathcal{K}}
\newcommand{\Lc}{\mathcal{L}}
\newcommand{\Pc}{\mathcal{P}}
\newcommand{\af}{\mathfrak{a}}
\newcommand{\Af}{\mathfrak{A}}
\renewcommand{\epsilon}{\varepsilon}
\newcommand{\vertiii}[1]{{\left\vert\kern-0.25ex\left\vert\kern-0.25ex\left\vert #1
    \right\vert\kern-0.25ex\right\vert\kern-0.25ex\right\vert}}
\newcommand{\vertiiis}[1]{{\vert\kern-0.25ex\vert\kern-0.25ex\vert #1
    \vert\kern-0.25ex\vert\kern-0.25ex\vert}}
\DeclareMathOperator{\dist}{dist}
\DeclareMathOperator{\ess}{ess}
\DeclareMathOperator{\Osc}{Osc}
\DeclareMathOperator{\spec}{sp}
\DeclareMathOperator{\VMO}{VMO}
\DeclareMathOperator{\VO}{VO}
\DeclareMathOperator{\BDO}{BDO}
\DeclareMathOperator{\Real}{Re}
\renewcommand{\Re}{\Real}
\DeclareMathOperator{\Imag}{Im}
\renewcommand{\Im}{\Imag}
\newcommand{\from}{\colon}
\providecommand{\scpr}[2]{\left\langle #1, #2 \right\rangle}
\renewcommand{\sp}{\scpr}
\providecommand{\abs}[1]{\left\lvert#1\right\rvert}
\providecommand{\norm}[1]{\left\lVert#1\right\rVert}
\providecommand{\set}[1]{\left\{ #1\right\}}
\newtheorem{thm}{Theorem}
\newtheorem{lem}[thm]{Lemma}
\newtheorem{prop}[thm]{Proposition}
\newtheorem{cor}[thm]{Corollary}
\newtheorem*{cor*}{Corollary}
\theoremstyle{definition}
\newtheorem{defn}[thm]{Definition}
\newtheorem{ass}[thm]{Assumption}
\theoremstyle{remark}
\newtheorem{ex}[thm]{Example}
\newtheorem{qn}[thm]{Question}
\numberwithin{equation}{section}
\begin{document}
\title{\bf Toeplitz and related operators on polyanalytic Fock spaces}
\author{Raffael Hagger\footnote{Mathematisches Seminar, Christian-Albrechts-Universit\"at zu Kiel, Heinrich-Hecht-Platz 6, 24118 Kiel, Germany, hagger@math.uni-kiel.de}}
\maketitle

\begin{center}\textit{In Memory of Harold Widom}\end{center}

\begin{abstract}
We give a characterization of compact and Fredholm operators on polyanalytic Fock spaces in terms of limit operators. As an application we obtain a generalization of the Bauer--Isralowitz theorem using a matrix valued Berezin type transform. We then apply this theorem to Toeplitz and Hankel operators to obtain necessary and sufficient conditions for compactness. As it turns out, whether or not a Toeplitz or Hankel operator is compact does not depend on the polyanalytic order. For Hankel operators this even holds on the true polyanalytic Fock spaces.

\medskip
\textbf{AMS subject classification:} Primary: 47B35; Secondary: 30H20, 47B07, 47A53, 47L80

\medskip
\textbf{Keywords:} polyanalytic Fock space, Toeplitz, Hankel, compact, Fredholm, limit operators
\end{abstract}

\section{Introduction} \label{introduction}

Polyanalytic functions on $\C$ (also called polyentire functions) are smooth functions $f \from \C \to \C$ in the variables $z$ and $\bar z$ that satisfy
\[\frac{\partial^n f}{(\partial \bar z)^n} = 0\]
for some $n \in \N$. They can be represented in the form
\[f(z) = \sum\limits_{j = 0}^{n-1} h_j(z)\bar{z}^j,\]
where the $h_j$ are entire functions. We thrn say that $f$ is of polyanalytic order at most $n$. In terms of regularity, polyanalytic functions are somewhere in between (complex) analytic and real analytic functions. They still satisfy a Cauchy type integral equation and are subject to Liouville's theorem, but the maximum principle fails as well as the strong form of the identity theorem. For example, $f(z) = 1 - \abs{z}^2$ defines a polyanalytic function with a maximum at $0$, which vanishes on the unit circle; two features a non-zero analytic function cannot have. But of course we still have a weaker form of the identity theorem, which also holds for real analytic functions: A polyanalytic function that vanishes on an open set is equal to $0$ everywhere. We refer to \cite{Balk} for an overview of results on polyanalytic functions.

Polyanalytic functions have been studied for over a century as they naturally appear in the theory of elasticity \cite{Kolossov,Muskhelishvili}. However, it was only recently discovered that certain polyanalytic function spaces posess an interesting creation-annihilation structure similar to the quantum harmonic oscillator \cite{Vasilevski}. Subsequently, several connections to time-frequency analysis, signal processing and quantum mechanics have been found. We refer the interested reader to \cite{AbFei,RoVa} for now and return to related work after some introductory material.

Let $\mu$ denote the Gaussian measure on $\C$ defined by
\begin{equation} \label{eq:Gaussian}
\mathrm{d}\mu(z) = \frac{1}{\pi} e^{-\abs{z}^2} \, \mathrm{d}z.
\end{equation}
The polyanalytic Fock space $\Fc^2_n$ is the closed subset of $L^2(\C,\mu)$ consisting of polyanalytic functions of order at most $n$. For $n = 1$ we of course get the classical Fock space $\Fc^2 = \Fc^2_1$ of analytic functions. Vasilevski \cite{Vasilevski} now observed that the polyanalytic Fock spaces can be decomposed into an orthogonal sum of so-called true polyanalytic Fock spaces
\[\Fc^2_n = \bigoplus\limits_{k = 1}^n \Fc^2_{(k)},\]
where $\Fc^2_{(k)}$ consists of those $f \in L^2(\C,\mu)$ that can be written as
\[f(z) = \frac{1}{(k-1)!} e^{\abs{z}^2} \frac{\partial^{k-1}}{\partial z^{k-1}}\left(e^{-\abs{z}^2}g(z)\right)\]
for an entire function $g$. Now consider the following operators defined on $\Fc^2_n$:
\[\af^\dagger := \left(-\frac{\partial}{\partial z} + \bar{z}\right) \quad \text{and} \quad \af := \frac{\partial}{\partial \bar{z}}.\]
The operator $\frac{1}{\sqrt{k}}\af^\dagger$ is an isometric isomorphism between $\Fc^2_{(k)}$ and $\Fc^2_{(k+1)}$ with inverse $\frac{1}{\sqrt{k}}\af$. In particular, $N := \af^\dagger\af$ is the counting operator, that is,
\[Nf = kf \quad \text{for } f \in \Fc^2_{(k+1)},\]
and it holds $[\af,\af^\dagger] = I$. Summing up all the true polyanalytic Fock spaces, we obtain $L^2(\C,\mu)$:
\begin{equation} \label{eq:decomposition}
L^2(\C,\mu) = \bigoplus\limits_{k = 1}^\infty \Fc^2_{(k)};
\end{equation}
see \cite[Corollary 2.4]{Vasilevski}.

Just like $\Fc^2$, the true polyanalytic Fock spaces are reproducing kernel Hilbert spaces. Their reproducing kernels are given by
\[K_{(k)}(z,w) = \frac{1}{(k-1)!}\left(-\frac{\partial}{\partial \bar{w}} + w\right)^{k-1}\left(-\frac{\partial}{\partial z} + \bar{z}\right)^{k-1}e^{z\bar{w}} = L_{k-1}^0(\abs{z-w}^2)e^{z\bar{w}},\]
where for $\alpha \in \N_0$ the
\[L_k^\alpha(x) := \sum\limits_{j = 0}^k (-1)^j{k+\alpha \choose k-j}\frac{x^j}{j!}\]
are the generalized Laguerre polynomials. Consequently, the orthogonal projection onto $\Fc^2_{(k)}$ is given by
\[P_{(k)}f(z) = \int_{\C} f(w)L_{k-1}^0(\abs{z-w}^2)e^{z\bar{w}} \, \mathrm{d}\mu(w)\]
for $z \in \C$, $f \in L^2(\C,\mu)$. As $\Fc^2_n$ is equal to the orthogonal sum of true polyanalytic Fock spaces, the orthogonal projection $P_n$ onto $\Fc^2_n$ is just the sum of the $P_{(k)}$. Using an identity for Laguerre polynomials, we get
\[P_nf(z) = \int_{\C} f(w)L_{n-1}^1(\abs{z-w}^2)e^{z\bar{w}} \, \mathrm{d}\mu(w)\]
for $z \in \C$, $f \in L^2(\C,\mu)$.

Via the decomposition \eqref{eq:decomposition}, we can define the isometry
\[\Af^{\dagger} \from L^2(\C,\mu) \to L^2(\C,\mu), \quad \Af^{\dagger}f = \Af^{\dagger}\sum\limits_{k = 1}^{\infty} f_k := \sum\limits_{k = 1}^{\infty} \frac{1}{\sqrt{k}}\af^{\dagger}f_k,\]
where $f_k := P_{(k)}f \in \Fc_{(k)}^2$ is the $k$-th component of $f \in L^2(\C,\mu)$. Its adjoint is of course given by
\[\Af \from L^2(\C,\mu) \to L^2(\C,\mu), \quad \Af f = \Af\sum\limits_{k = 1}^{\infty} f_k = \sum\limits_{k = 2}^{\infty} \frac{1}{\sqrt{k-1}}\af f_k.\]
By definition, we have $\Af^{\dagger}(\Fc_{(k)}^2) = \Fc_{(k+1)}^2$, $\Af(\Fc_{(k+1)}^2) = \Fc_{(k)}^2$ and $\Af(\Fc_{(1)}^2) = \set{0}$. In particular, $\Af^{\dagger}$ and $\Af$ can be seen as the forward and backward shift on $\bigoplus\limits_{k = 1}^\infty \Fc^2_{(k)} \cong \ell^2(\N,\Fc^2)$. It is also clear that $\Af\Af^{\dagger} = I$ and $\Af^{\dagger}\Af = (I - P_{(1)})$.

For bounded functions $f$ we can now define polyanalytic Toeplitz and Hankel operators in the usual way:
\begin{gather*}
T_{f,(k)} \from \Fc^2_{(k)} \to \Fc^2_{(k)}, \quad T_{f,(k)}g = P_{(k)}(fg),\\
T_{f,n} \from \Fc^2_n \to \Fc^2_n, \quad T_{f,n}g = P_n(fg),\\
H_{f,(k)} \from \Fc^2_{(k)} \to L^2(\C,\mu), \quad H_{f,(k)}g = (I-P_{(k)})(fg),\\
H_{f,n} \from \Fc^2_n \to L^2(\C,\mu), \quad H_{f,n}g = (I-P_n)(fg).
\end{gather*}
For $k = 1$ (or equivalently $n = 1$) we just get the usual Toeplitz and Hankel operators on the standard analytical Fock space $\Fc^2 = \Fc^2_1 = \Fc^2_{(1)}$.

Shortly after Axler and Zheng \cite{AxZhe} proved a similar result for the Bergman space over the unit disk, Engli\v{s} \cite{Englis} showed that a Toeplitz operator on $\Fc^2$ is compact if and only if its Berezin transform vanishes at infinity. In fact, in both cases this is true not only for Toeplitz operators but for any finite sum of finite products of Toeplitz operators. This was later generalized to what we shall call the Bauer--Isralowitz theorem: A bounded linear operator on $\Fc^2$ is compact if and only if it is in the $C^*$-algebra generated by all Toeplitz operators and its Berezin transform vanishes at infinity. Motivated by this result, many allegedly larger $C^*$-algebras of operators, such as the sufficiently and weakly localized operators \cite{IsMiWi,XiaZhe}, have been introduced where the same result would hold. However, in 2015 Xia \cite{Xia} proved the surprising result that all these algebras actually coincide with the closure of the set of all Toeplitz operators. All the different approaches notably had some limit operator type arguments in common, reminiscent of the Fredholm theory of sequence spaces. Consequently, in analogy to the sequence space case, another $C^*$-algebra, called the band-dominated operators, was introduced in \cite{FuHa}, which formalized the limit operator idea. It was later shown in \cite{BaFu} that this would be again the same algebra, but the band-dominated approach of \cite{FuHa} also provided a  characterization of Fredholm operators. Moreover, it allowed to study Hankel operators, providing quick proofs for well-known compactness results as well as some new insights \cite{HaVi}.

Apart from different algebras, several authors also started considering different domains such as bounded symmetric domains \cite{Hagger_BSD} and Bergman-type function spaces \cite{MiWi} just to name a few. The structure of these results is always very similar. In order to be compact, an operator must be contained in a certain $C^*$-algebra and the Berezin transform must vanish at the boundary of the domain. In this paper we now present an example where the Berezin transform is not strong enough to characterize compactness, even within the reasonable algebra of band-dominated operators, which, as mentioned above, at least in case of the analytic Fock space is just the closure of the set of Toeplitz operators.

In \cite{RoVa}, Rozenblum and Vasilevski showed that a Toeplitz operator on a true polyanalytic Fock space $\Fc^2_{(k)}$ is unitarily equivalent to a Toeplitz operator on $\Fc^2$, but with possibly very irregular symbol. They then offer the options of either considering Toeplitz operators on `bad' spaces with `nice' symbols or Toeplitz operators on `nice' spaces with `bad' symbols. Rozenblum and Vasilevski conclude that the second option is more promising. Our (operator algebraic) point of view here is somewhat different as the inner structure of the polyanalytic function spaces does not matter very much in our analysis. Indeed, the band-dominated operators look exactly the same on each true polyanalytic Fock space (in the sense that the algebras are isomorphic) and hence many results can be reduced directly to the analytic case via $\Af$ and $\Af^{\dagger}$. This also lets us circumvent some of the problems in \cite{LuSk}, which prevented a generalization of the Bauer--Isralowitz theorem to polyanalytic Fock spaces. However, the downside is that our construction of the generalized Berezin transform is rather ad-hoc and therefore appears to be less natural. Nevertheless, we manage to obtain a generalization of the Bauer--Isralowitz theorem for polyanalytic Fock spaces using a matrix valued Berezin type transform. It is then not very suprising that our limit operator approach also provides generalizations of other typical results in the area such as the characterization of compact Hankel operators in terms of $\VMO$-functions and the corresponding formula for the essential spectrum of Toeplitz operators. We note that the essential spectrum of polyanalytic Toeplitz operators did not get much attention in the literature so far. The only related (but much weaker) result known to the author is for the polyanalytic Bergman space over the unit disk and due to Wolf \cite{Wolf}. Maybe the most unexpected result of this paper is that the compactness of Hankel operators $H_{f,(k)}$ on $\Fc^2_{(k)}$ does not depend on the order $k$. It is somewhat expected from the work of Rozenblum and Vasilevski \cite{RoVa} that if $H_{f,(1)}$ is compact, then all other Hankel operators $H_{f,(k)}$ are compact as well, but the other direction appears to be rather surprising in this context.

\section{\texorpdfstring{Properties of $\Af$ and $\Af^{\dagger}$}{Properties of A}} \label{sec:Properties_of_A}

We first introduce some more notation that will be needed later on. The algebra of bounded linear operators between two Hilbert spaces $H_1$ and $H_2$ will be denoted by $\Lc(H_1,H_2)$. If $H_1 = H_2$, we will just write $\Lc(H_1)$. Similarly, we will use $\Kc(H_1,H_2)$ and $\Kc(H_1)$ for the compact operators between the respective Hilbert spaces. The identity operator on any Hilbert space will be denoted by $I$. Furthermore, the characteristic function of a set $K \subseteq \C$ will be denoted by $\1_K$. An open ball in $\C$ with midpoint $z$ and radius $r$ will be denoted by $B(z,r)$.

The first property we are going to state is rather obvious, but still worth noting for later.

\begin{prop} \label{prop:shifts_and_projections}
For every $k \in \N$ we have $\Af^{\dagger}P_{(k)}\Af = P_{(k+1)}$.
\end{prop}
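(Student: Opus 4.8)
The plan is to verify the identity by a direct computation on an arbitrary $f \in L^2(\C,\mu)$, exploiting the explicit definitions of $\Af$ and $\Af^{\dagger}$ together with the orthogonality of the decomposition \eqref{eq:decomposition}. Writing $f = \sum_{j=1}^{\infty} f_j$ with $f_j = P_{(j)}f \in \Fc^2_{(j)}$, both sides of $\Af^{\dagger}P_{(k)}\Af = P_{(k+1)}$ are bounded operators, so it suffices to check that they agree on every such $f$.

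First I would apply the formula for $\Af$ to obtain $\Af f = \sum_{j=2}^{\infty} \frac{1}{\sqrt{j-1}}\af f_j$. Since $\frac{1}{\sqrt{j-1}}\af$ maps $\Fc^2_{(j)}$ isometrically onto $\Fc^2_{(j-1)}$, each summand $\frac{1}{\sqrt{j-1}}\af f_j$ already lies in $\Fc^2_{(j-1)}$, so this is precisely the orthogonal decomposition of $\Af f$. Consequently the projection onto $\Fc^2_{(k)}$ simply selects the term with $j-1 = k$, giving $P_{(k)}\Af f = \frac{1}{\sqrt{k}}\af f_{k+1}$.

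Next I would apply $\Af^{\dagger}$ to this vector. Because $\frac{1}{\sqrt{k}}\af f_{k+1} \in \Fc^2_{(k)}$ and $\Af^{\dagger}$ acts on $\Fc^2_{(k)}$ as $\frac{1}{\sqrt{k}}\af^{\dagger}$, we get $\Af^{\dagger}P_{(k)}\Af f = \frac{1}{\sqrt{k}}\af^{\dagger}\bigl(\frac{1}{\sqrt{k}}\af f_{k+1}\bigr)$. Now I invoke the fact recorded in the introduction that $\frac{1}{\sqrt{k}}\af^{\dagger}$ is an isometric isomorphism from $\Fc^2_{(k)}$ onto $\Fc^2_{(k+1)}$ with inverse $\frac{1}{\sqrt{k}}\af$; applying these mutually inverse maps to $f_{k+1}$ returns $f_{k+1}$ itself. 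Equivalently, one can use $\af^{\dagger}\af f_{k+1} = N f_{k+1} = k f_{k+1}$ since $f_{k+1} \in \Fc^2_{(k+1)}$. Hence $\Af^{\dagger}P_{(k)}\Af f = f_{k+1} = P_{(k+1)}f$, which is the desired identity.

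The computation is essentially bookkeeping, and the only point requiring genuine care is the second step: one must use that $\af$ sends $\Fc^2_{(j)}$ into the orthogonal summand $\Fc^2_{(j-1)}$ so that applying $P_{(k)}$ isolates precisely the single index $j = k+1$. Everything else follows from the isometry relations already established, so I do not anticipate a real obstacle here.
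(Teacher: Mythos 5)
Your proof is correct: the termwise computation is valid (each summand $\frac{1}{\sqrt{j-1}}\af f_j$ of $\Af f$ lies in $\Fc^2_{(j-1)}$, so the bounded projection $P_{(k)}$ indeed isolates the index $j = k+1$, and the final step $\frac{1}{k}\af^{\dagger}\af f_{k+1} = f_{k+1}$ is exactly the eigenvalue relation for $N$ on $\Fc^2_{(k+1)}$). However, it takes a genuinely different route from the paper. The paper's proof is a one-line abstract argument: since $\Af\Af^{\dagger} = I$, the operator $\Af^{\dagger}P_{(k)}\Af$ is self-adjoint and idempotent, hence an orthogonal projection, and its range is $\Af^{\dagger}(\Fc^2_{(k)}) = \Fc^2_{(k+1)}$; as an orthogonal projection is uniquely determined by its range, it must equal $P_{(k+1)}$. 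You instead verify the identity vector by vector, exploiting the block-shift structure of $\Af$ and $\Af^{\dagger}$ relative to the decomposition $L^2(\C,\mu) = \bigoplus_{k=1}^{\infty} \Fc^2_{(k)}$ together with $\af^{\dagger}\af = N$. Each approach has its merits: the paper's argument is shorter and isolates the single algebraic fact ($\Af\Af^{\dagger} = I$ plus identification of the range) that drives the result, so it would apply verbatim to any co-isometry carrying one summand onto another; your computation is more elementary and makes the mechanism transparent, showing explicitly why conjugation by the shift raises the projection index by exactly one --- which is precisely the picture of $\Af^{\dagger}$ and $\Af$ as forward and backward shifts on $\ell^2(\N,\Fc^2)$ that the paper invokes later on.
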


\begin{proof}
As $\Af\Af^{\dagger} = I$, $\Af^{\dagger}P_{(k)}\Af$ is an orthogonal projection onto $\Fc^2_{(k+1)}$, hence equal to $P_{(k+1)}$.
\end{proof}

Next, we need the concept of band-dominated operators. The notion originates in the theory of sequence spaces (see e.g.~\cite{RaRoSi}), but has also been introduced to Bergman and Fock spaces a few years ago in order to study compactness and Fredholm problems \cite{FuHa,Hagger_Ball}. A slightly more systematic introduction of band-dominated operators to non-discrete spaces is given in \cite{HaSe}.

\begin{defn} \label{defn:BDO}
An operator $T \in \Lc(L^2(\C,\mu))$ is called a band operator if
\[\sup\set{\dist(K,K') : K,K' \subseteq \C, M_{\1_{K'}}TM_{\1_K} \neq 0} < \infty,\]
where $\dist(K,K') := \inf\limits_{w \in K,z \in K'} \abs{w-z}$ is the distance between the sets $K$ and $K'$. $T$ is called band-dominated if it is the norm limit of a sequence of band operators. The set of band-dominated operators is denoted by $\BDO^2$. An operator $T$ defined on $\Fc^2_{(k)}$ or $\Fc^2_n$ is called band-dominated if $TP_{(k)} \in \BDO^2$ or $TP_n \in \BDO^2$, respectively. The sets of band-dominated operators in $\Lc(\Fc^2_{(k)})$ and $\Lc(\Fc^2_n)$ will be denoted by $\Ac^2_{(k)}$ and $\Ac^2_n$, respectively.
\end{defn}

It turns out that the sets $\BDO^2$, $\Ac^2_{(k)}$ and $\Ac^2_n$ are actually $C^*$-algebras that contain all compact operators; see \cite[Theorems 3.7 and 3.10]{HaSe}. Luckily, for most integral operators it is relatively staightforward to prove their membership in $\BDO^2$. The following lemma is a special case that essentially follows from Young's inequality.

\begin{lem} \label{lem:integral_ops_BDO}
Let $\nu$ be the Gaussian measure defined by $\mathrm{d}\nu(z) := \frac{1}{2\pi}e^{-\frac{1}{2}|z|^2} \mathrm{d}z$, $g \in L^1(\C,\nu)$ and
\[(Tf)(z) := \int_{\C} f(w)g(z-w)e^{z\bar w} \, \mathrm{d}\mu(w)\]
for $z \in \C$ and $f \in L^2(\C,\mu)$. Then $T$ defines a bounded linear operator on $L^2(\C,\mu)$, $T \in \BDO^2$ and $\norm{T} \leq 2\norm{g}_{L^1(\C,\nu)}$.
\end{lem}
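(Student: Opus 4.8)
The plan is to reduce $T$ to a genuine convolution operator on the unweighted space $L^2(\C,\mathrm{d}z)$, where Young's inequality applies directly. Consider the map $U \from L^2(\C,\mu) \to L^2(\C,\mathrm{d}z)$ given by $(Uf)(z) = \frac{1}{\sqrt{\pi}}e^{-\abs{z}^2/2}f(z)$. A one-line computation shows $\norm{Uf}_{L^2(\C,\mathrm{d}z)} = \norm{f}_{L^2(\C,\mu)}$, so $U$ is an isometric isomorphism with inverse $(U^{-1}F)(w) = \sqrt{\pi}e^{\abs{w}^2/2}F(w)$, and in particular $\norm{T} = \norm{UTU^{-1}}$.

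First I would compute the conjugated operator $\tilde{T} := UTU^{-1}$. Substituting and cancelling the Gaussian weights, $\tilde{T}$ is again an integral operator, now with respect to Lebesgue measure, whose kernel is $\frac{1}{\pi}g(z-w)e^{z\bar w - \abs{z}^2/2 - \abs{w}^2/2}$. The whole point of the conjugation is the elementary identity $\Re\bigl(z\bar w - \tfrac{\abs{z}^2}{2} - \tfrac{\abs{w}^2}{2}\bigr) = -\tfrac{1}{2}\abs{z-w}^2$, so that the modulus of this kernel equals $\frac{1}{\pi}\abs{g(z-w)}e^{-\abs{z-w}^2/2}$, a function of $z-w$ alone; the leftover phase $e^{i\Im(z\bar w)}$ is irrelevant for a norm estimate.

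Next, since $\abs{\tilde{T}F} \leq k \ast \abs{F}$ pointwise with $k(u) := \frac{1}{\pi}\abs{g(u)}e^{-\abs{u}^2/2}$, Young's inequality gives $\norm{\tilde{T}} \leq \norm{k}_{L^1(\C,\mathrm{d}z)}$, which in particular proves boundedness of $T$. Tracking the Gaussian normalisation, $\norm{k}_{L^1(\C,\mathrm{d}z)} = \frac{1}{\pi}\int_{\C}\abs{g(u)}e^{-\abs{u}^2/2}\,\mathrm{d}u = 2\norm{g}_{L^1(\C,\nu)}$, so $\norm{T} = \norm{\tilde{T}} \leq 2\norm{g}_{L^1(\C,\nu)}$, exactly the claimed constant.

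For $T \in \BDO^2$ I would argue directly on $L^2(\C,\mu)$ by truncation. Setting $g_R := g\1_{B(0,R)}$ and letting $T_R$ be the corresponding operator, its $\mu$-kernel $g_R(z-w)e^{z\bar w}$ vanishes whenever $\abs{z-w} > R$, whence $M_{\1_{K'}}T_RM_{\1_K} = 0$ as soon as $\dist(K,K') > R$; thus each $T_R$ is a band operator. Since $T$ depends linearly on $g$, the norm estimate applied to $g - g_R$ yields $\norm{T - T_R} \leq 2\norm{g - g_R}_{L^1(\C,\nu)} \to 0$ by dominated convergence, so $T$ is a norm limit of band operators and hence band-dominated. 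The only real content is the factor $e^{z\bar w}$, which disguises the convolution structure, so I expect the main step to be the conjugation by $U$ together with the identity for $\abs{z-w}^2$; everything after that is Young's inequality and a routine truncation.
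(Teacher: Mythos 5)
Your proof is correct and is essentially the paper's own argument in slightly different packaging: the paper obtains the same reduction by multiplying $\abs{(T-T_m)f(z)}$ pointwise by $e^{-\frac{1}{2}\abs{z}^2}$, which is exactly your conjugation by $U$ written out inline, and then applies Young's inequality and the same truncation $g\1_{B(0,m)}$ to produce band operators. Both proofs share every key step (the identity $\Re(z\bar w)-\tfrac{1}{2}\abs{z}^2-\tfrac{1}{2}\abs{w}^2 = -\tfrac{1}{2}\abs{z-w}^2$, Young's inequality with constant $2$, and norm-approximation by band operators), so there is nothing to add.
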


\begin{proof}
For $m \geq 0$ we define
\[(T_mf)(z) := \int_{\C} f(w)g(z-w)\1_{B(0,m)}(z-w)e^{z\bar{w}} \, \mathrm{d}\mu(w)\]
for $z \in \C$ and $f \in L^2(\C,\mu)$. This implies
\[\abs{(T-T_m)f(z)}e^{-\frac{1}{2}\abs{z}^2} \leq \frac{1}{\pi}\int_{\C} \abs{f(w)}e^{-\frac{1}{2}\abs{w}^2}\1_{\C \setminus B(0,m)}(z-w)\abs{g(z-w)}e^{-\frac{1}{2}\abs{z-w}^2} \, \mathrm{d}w.\]
Using Young's inequality, we get
\[\norm{(T - T_m)f} \leq 2\norm{f}\norm{g\1_{\C \setminus B(0,m)}}_{L^1(\C,\nu)}.\]
For $m = 0$ we obtain the boundedness of $T$ and the norm estimate. As the operators $T_m$ are obviously band operators, we also get $T \in \BDO^2$.
\end{proof}

With this lemma we can now prove the following important proposition. We actually do not know if $\Af$ and $\Af^{\dagger}$ are in $\BDO^2$ themselves, but for our purposes it is sufficent to know $\Af P_{(k)},\Af^{\dagger}P_{(k)} \in \BDO^2$.

\begin{prop} \label{prop:AP_BDO}
$\Af P_{(k)}$ and $\Af^{\dagger} P_{(k)}$ are contained in $\BDO^2$ for all $k \in \N$.
\end{prop}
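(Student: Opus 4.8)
The plan is to realize $\Af^{\dagger}P_{(k)}$ and $\Af P_{(k)}$ as integral operators of exactly the type covered by Lemma~\ref{lem:integral_ops_BDO} and then read off membership in $\BDO^2$. The starting point is the observation that $P_{(k)}f \in \Fc^2_{(k)}$ for every $f \in L^2(\C,\mu)$, so that $\Af^{\dagger}$ and $\Af$ act on the range of $P_{(k)}$ simply as the normalized differential operators: $\Af^{\dagger}P_{(k)} = \frac{1}{\sqrt{k}}\af^{\dagger}P_{(k)}$ with $\af^{\dagger} = -\frac{\partial}{\partial z} + \bar z$, and, for $k \geq 2$, $\Af P_{(k)} = \frac{1}{\sqrt{k-1}}\af P_{(k)}$ with $\af = \frac{\partial}{\partial \bar z}$, while $\Af P_{(1)} = 0 \in \BDO^2$ trivially.

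First I would write out the kernel of $P_{(k)}$, namely $K(z,w) = L_{k-1}^0(\abs{z-w}^2)e^{z\bar w}$, and differentiate under the integral sign. Applying $\af^{\dagger}$ in the variable $z$ and using $\frac{\partial}{\partial z}\abs{z-w}^2 = \bar z - \bar w$ together with $\frac{\partial}{\partial z}e^{z\bar w} = \bar w e^{z\bar w}$, the terms without a Laguerre derivative recombine into $(\bar z - \bar w)L_{k-1}^0(\abs{z-w}^2)e^{z\bar w}$, and one is left with the kernel
\[(\bar z - \bar w)\bigl[L_{k-1}^0(\abs{z-w}^2) - (L_{k-1}^0)'(\abs{z-w}^2)\bigr]e^{z\bar w}.\]
Similarly, applying $\af$ and using $\frac{\partial}{\partial \bar z}\abs{z-w}^2 = z-w$ and $\frac{\partial}{\partial \bar z}e^{z\bar w} = 0$ gives the kernel $(z-w)(L_{k-1}^0)'(\abs{z-w}^2)e^{z\bar w}$. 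The crucial point is that in both cases the kernel has the form $g(z-w)e^{z\bar w}$ with
\[g(u) = \bar u\bigl[L_{k-1}^0(\abs u^2) - (L_{k-1}^0)'(\abs u^2)\bigr] \quad \text{resp.} \quad g(u) = u\,(L_{k-1}^0)'(\abs u^2),\]
which is exactly the structure required by Lemma~\ref{lem:integral_ops_BDO}.

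It then remains to check $g \in L^1(\C,\nu)$. Since $L_{k-1}^0$ is a polynomial of degree $k-1$, each $g$ is dominated by a polynomial in $\abs u$, and every such polynomial is integrable against the Gaussian measure $\nu$; hence $g \in L^1(\C,\nu)$ and Lemma~\ref{lem:integral_ops_BDO} yields $\af^{\dagger}P_{(k)}, \af P_{(k)} \in \BDO^2$, so that $\Af^{\dagger}P_{(k)}$ and $\Af P_{(k)}$ are in $\BDO^2$ as well.

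The main technical obstacle is the justification of differentiating under the integral sign, that is, verifying that $\af^{\dagger}P_{(k)}$ and $\af P_{(k)}$ genuinely coincide with the integral operators whose kernels I computed formally. This requires controlling the difference quotients uniformly, but the Gaussian factor $e^{z\bar w - \abs w^2}$ provides more than enough decay in $w$ for a dominated-convergence argument, so I expect this step to be routine rather than subtle. Everything else—the cancellation producing the dependence on $z-w$ and the polynomial growth of $g$—is elementary.
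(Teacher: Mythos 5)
Your proposal is correct, and its engine is the same as the paper's: realize the operator as an integral operator with kernel $g(z-w)e^{z\bar w}$, $g \in L^1(\C,\nu)$, and invoke Lemma~\ref{lem:integral_ops_BDO}. Indeed, your treatment of $\Af P_{(k)}$ coincides with the paper's (the paper merely rewrites $(L_{k-1}^0)' = -L_{k-2}^1$, an identity you do not need). The one genuine difference is how $\Af^{\dagger}P_{(k)}$ is handled. The paper never touches its kernel: it observes that $\Af^{\dagger}P_{(k)}$ is the adjoint of $\Af P_{(k+1)}$ and uses that $\BDO^2$ is a $C^*$-algebra, hence closed under adjoints, so only the $\Af P_{(k)}$ computation is required. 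You instead apply $\af^{\dagger} = -\frac{\partial}{\partial z} + \bar z$ directly to the kernel of $P_{(k)}$ and verify the cancellation that combines $\bar z L_{k-1}^0(\abs{z-w}^2) - \bar w L_{k-1}^0(\abs{z-w}^2)$ into $(\bar z - \bar w)L_{k-1}^0(\abs{z-w}^2)$, so that the kernel again depends on the variables only through $z-w$ and the factor $e^{z\bar w}$; this computation is correct, and since $\BDO^2$ is a linear space the normalizing constants $1/\sqrt{k}$ and $1/\sqrt{k-1}$ are harmless. What each route buys: the paper's adjoint trick is shorter and needs only one kernel computation and one differentiation under the integral sign (a step the paper, like you, treats as routine); your direct route is self-contained at the level of kernels and has the side benefit of exhibiting the explicit kernel of $\Af^{\dagger}P_{(k)}$, which the adjoint argument never produces.
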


\begin{proof}
As $\Af^{\dagger} P_{(k)}$ is the adjoint of $\Af P_{(k+1)}$ and $\BDO^2$ is a $C^*$-algebra, it suffices to show that $\Af P_{(k)} \in \BDO^2$ for all $k \in \N$. $\Af P_{(1)}$ vanishes, so assume $k \geq 2$. For $k \geq 2$ the operator $\Af P_{(k)}$ can be written as an integral operator:
\begin{align*}
\Af P_{(k)}f(z) &= \frac{1}{\sqrt{k-1}} \af P_{(k)}f(z)\\
&= \frac{1}{\sqrt{k-1}} \frac{\partial}{\partial \bar{z}} \int_{\C} f(w)L_{k-1}^0(\abs{z-w}^2)e^{z\bar{w}} \, \mathrm{d}\mu(w)\\
&= -\frac{1}{\sqrt{k-1}} \int_{\C} f(w)(z-w)L_{k-2}^1(\abs{z-w}^2)e^{z\bar{w}} \, \mathrm{d}\mu(w),
\end{align*}
where $f \in L^2(\C,\mu)$, $z \in \C$ and we used $(L_{k-1}^0)' = -L_{k-2}^1$. Choosing $g(z) := -\frac{1}{\sqrt{k-1}}zL_{k-2}^1(\abs{z}^2)$, we obtain $\Af P_{(k)} \in \BDO^2$ by Lemma \ref{lem:integral_ops_BDO}.
\end{proof}

As $\BDO^2$ is a $C^*$-algebra, we also have $P_{(k)} = \Af P_{(k+1)}\Af^{\dagger} P_{(k)} \in \BDO^2$. Obviously, one could also check this directly by the same argument as in Proposition \ref{prop:AP_BDO}.

\begin{cor} \label{cor:P_BDO}
$P_{(k)},P_n \in \BDO^2$ for all $k,n \in \N$.
\end{cor}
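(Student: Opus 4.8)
The goal is Corollary \ref{cor:P_BDO}: the projections $P_{(k)}$ and $P_n$ belong to $\BDO^2$. The remark immediately preceding the corollary already sketches the quickest route, so my plan is to make that observation precise and then handle $P_n$ as a finite sum.

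The plan is to argue first that each true-space projection $P_{(k)}$ lies in $\BDO^2$ by exploiting the fact that $\BDO^2$ is a $C^*$-algebra, hence closed under products, together with the shift identity $\Af\Af^{\dagger} = I$ and Proposition \ref{prop:AP_BDO}. Concretely, I would observe that $\Af P_{(k+1)} \in \BDO^2$ and $\Af^{\dagger} P_{(k)} \in \BDO^2$ by Proposition \ref{prop:AP_BDO}, so their product $\Af P_{(k+1)} \Af^{\dagger} P_{(k)}$ is in $\BDO^2$ as well. It then remains to identify this product with $P_{(k)}$. Using Proposition \ref{prop:shifts_and_projections}, which gives $\Af^{\dagger} P_{(k)} \Af = P_{(k+1)}$, one rewrites
\[
\Af P_{(k+1)} \Af^{\dagger} P_{(k)} = \Af \bigl(\Af^{\dagger} P_{(k)} \Af\bigr) \Af^{\dagger} P_{(k)} = \Af \Af^{\dagger} P_{(k)} \Af \Af^{\dagger} P_{(k)},
\]
and then $\Af\Af^{\dagger} = I$ collapses this to $P_{(k)} \Af \Af^{\dagger} P_{(k)} = P_{(k)}^2 = P_{(k)}$, using that $P_{(k)}$ is an orthogonal projection. (Alternatively, and perhaps more transparently, one simply notes that $\Af^{\dagger} P_{(k)} = \Af^{\dagger} P_{(k)} \Af \Af^{\dagger} = P_{(k+1)} \Af^{\dagger}$ maps into $\Fc^2_{(k+1)}$, so $P_{(k+1)} \Af^{\dagger} P_{(k)} = \Af^{\dagger} P_{(k)}$, whence $\Af P_{(k+1)} \Af^{\dagger} P_{(k)} = \Af \Af^{\dagger} P_{(k)} = P_{(k)}$.) For the base case $k = 1$, the preceding argument uses $P_{(1)} = \Af P_{(2)} \Af^{\dagger} P_{(1)}$, and the algebraic identity still goes through since $\Af^{\dagger} P_{(1)} \in \BDO^2$ directly from Proposition \ref{prop:AP_BDO}.

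For $P_n$, I would recall from the introduction that $P_n = \sum_{k=1}^n P_{(k)}$, the orthogonal projection onto $\Fc^2_n = \bigoplus_{k=1}^n \Fc^2_{(k)}$. Since each summand lies in the linear space $\BDO^2$ and a finite sum of elements of a $C^*$-algebra stays in the algebra, we conclude $P_n \in \BDO^2$ as well.

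I do not anticipate a genuine obstacle here; the content is entirely bookkeeping, and the only point requiring care is verifying the operator identity that collapses $\Af P_{(k+1)} \Af^{\dagger} P_{(k)}$ to $P_{(k)}$. The cleanest way to present this is via the range-and-adjoint observation: because $\Af^{\dagger}$ maps $\Fc^2_{(k)}$ isometrically into $\Fc^2_{(k+1)}$ (as stated in the introduction), the projection $P_{(k+1)}$ acts as the identity on the range of $\Af^{\dagger} P_{(k)}$, and the relation $\Af\Af^{\dagger} = I$ finishes the computation. I would keep the write-up to a couple of lines, mirroring the terse style of the preceding remark, rather than spelling out every intermediate equality.
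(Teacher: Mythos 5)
Your proof is correct and follows essentially the same route as the paper: the identity $P_{(k)} = \Af P_{(k+1)}\Af^{\dagger}P_{(k)}$ combined with Proposition \ref{prop:AP_BDO} and the $C^*$-algebra property of $\BDO^2$, then $P_n = \sum_{k=1}^n P_{(k)}$ by linearity. You merely spell out the algebraic verification and the summation step that the paper leaves as a one-line remark.
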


As the multiplication operators $M_f$ for $f \in L^{\infty}(\C,\mu)$ are obviously contained in $\BDO^2$, all the Toeplitz and Hankel operators defined in the introduction are band-dominated.

\begin{cor} \label{cor:band_dominated}
Let $k,n \in \N$ and $f \in L^{\infty}(\C,\mu)$. Then the operators $T_{f,(k)}$, $T_{f,n}$, $H_{f,(k)}$ and $H_{f,n}$ are band-dominated.
\end{cor}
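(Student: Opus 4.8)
The plan is to reduce everything to the fact, already recorded above, that $\BDO^2$ is a $C^*$-algebra containing the three kinds of building blocks we need: the multiplication operators $M_f$ for $f \in L^{\infty}(\C,\mu)$, the projections $P_{(k)}$ and $P_n$ (by Corollary \ref{cor:P_BDO}), and the identity operator $I$. Since by Definition \ref{defn:BDO} an operator on $\Fc^2_{(k)}$ (resp.\ $\Fc^2_n$) is band-dominated precisely when its composition with $P_{(k)}$ (resp.\ $P_n$) lies in $\BDO^2$, the whole statement will follow once each of the four operators, composed with the appropriate projection, is exhibited as a product and sum of elements of $\BDO^2$.

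First I would check that $I \in \BDO^2$, which is needed for the Hankel operators. This is immediate: $M_{\1_{K'}}IM_{\1_K} = M_{\1_{K \cap K'}}$ vanishes whenever $\dist(K,K') > 0$, so $I$ is a band operator, and consequently $I - P_{(k)}$ and $I - P_n$ also belong to $\BDO^2$ as differences of its elements. Next, for $h \in L^2(\C,\mu)$ I would simply unwind the definitions. Since $P_{(k)}h \in \Fc^2_{(k)}$, the composition $T_{f,(k)}P_{(k)}$ is well defined on all of $L^2(\C,\mu)$ and
\[T_{f,(k)}P_{(k)}h = P_{(k)}\bigl(f \cdot P_{(k)}h\bigr) = P_{(k)}M_fP_{(k)}h,\]
so $T_{f,(k)}P_{(k)} = P_{(k)}M_fP_{(k)}$ is a product of three elements of $\BDO^2$ and hence lies in $\BDO^2$. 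The analogous identities
\[T_{f,n}P_n = P_nM_fP_n, \qquad H_{f,(k)}P_{(k)} = (I - P_{(k)})M_fP_{(k)}, \qquad H_{f,n}P_n = (I - P_n)M_fP_n\]
express the remaining three operators (after composition with the relevant projection) as products and differences of elements of $\BDO^2$, and are therefore again in $\BDO^2$. By Definition \ref{defn:BDO} all four families are band-dominated.

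There is no genuine obstacle here, as the result is a direct consequence of the algebra structure of $\BDO^2$ together with the membership statements established earlier. The only mildly delicate point is the bookkeeping of domains and codomains: one has to be slightly careful that the Hankel operators map into $L^2(\C,\mu)$ rather than back into the relevant polyanalytic Fock space, which is exactly why verifying $I \in \BDO^2$ (and hence $I - P_{(k)}, I - P_n \in \BDO^2$) is the one preliminary step worth isolating before assembling the products.
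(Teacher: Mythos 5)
Your proposal is correct and takes essentially the same route as the paper, which disposes of this corollary in one sentence: since $M_f \in \BDO^2$ is obvious and $P_{(k)}, P_n \in \BDO^2$ by Corollary \ref{cor:P_BDO}, the $C^*$-algebra structure of $\BDO^2$ gives $T_{f,(k)}P_{(k)} = P_{(k)}M_fP_{(k)} \in \BDO^2$ and likewise for the other three operators. Your explicit verification that $I \in \BDO^2$ and your bookkeeping of the Hankel operators' codomain are fine points the paper leaves implicit, but they do not change the argument.
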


We also get the following corollary of Proposition \ref{prop:AP_BDO}. It shows that the algebras of band-dominated operators on each true polyanalytic Fock space are isomorphic. This will allow us to jump back and forth between the spaces and, in particular, obtain a generalization of the Bauer--Isralowitz theorem.

\begin{cor} \label{cor:A^2_isomorphic}
The $C^*$-algebras $\Ac^2_{(k)}$ are isomorphic for $k \in \N$. The isomorphism $\Ac^2_{(1)} \to \Ac^2_{(k)}$ is given by $T \mapsto (\Af^{\dagger})^{k-1}T\Af^{k-1}$.
\end{cor}

For $z \in \C$ the Weyl operators $W_z \from L^2(\C,\mu) \to L^2(\C,\mu)$ are defined by
\[(W_zf)(w) = f(w-z)k_z(w),\]
where the $k_z$ denote the normalized reproducing kernels on $\Fc^2$, that is,
\[k_z(w) := e^{w\bar{z} - \frac{1}{2}\abs{z}^2}.\]
The following properties of Weyl operators are well-known and easy to check: $W_z$ is unitary with $W_z^* = W_{-z}$ and
\begin{equation} \label{eq:W_zW_w}
W_zW_w = e^{-i\Im(z\bar{w})}W_{z+w}.
\end{equation}
Moreover, $W_z$ obviously leaves $\Fc^2_{(1)}$ invariant. Our next proposition shows that all the true polyanalytic Fock spaces are actually left invariant.

\begin{prop} \label{prop:A_commutes_with_Weyl_operators}
We have $\Af W_z = W_z\Af$ and $\Af^{\dagger}W_z = W_z\Af^{\dagger}$ for all $z \in \C$. In particular, $\Fc^2_{(k)}$ is invariant under $W_z$ and we have $P_{(k)}W_z = W_zP_{(k)}$ for all $k \in \N$.
\end{prop}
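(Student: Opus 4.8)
The plan is to establish the commutation relations first at the level of the basic differential operators $\af$ and $\af^{\dagger}$, and only then transfer them to $\Af$, $\Af^{\dagger}$ and to the projections $P_{(k)}$. Writing the argument of the functions as $w$ and keeping $z$ for the Weyl shift, so that $\af = \frac{\partial}{\partial \bar w}$ and $\af^{\dagger} = -\frac{\partial}{\partial w} + \bar w$, the key computation is to apply $\af$ and $\af^{\dagger}$ to $(W_zf)(w) = f(w-z)e^{w\bar z - \frac12\abs{z}^2}$ via the product rule. The two points that make it work are that $e^{w\bar z}$ is holomorphic in $w$, hence annihilated by $\frac{\partial}{\partial \bar w}$, and that in the $\af^{\dagger}$ computation the factor $-\bar z$ produced by differentiating the exponential combines with the multiplier $\bar w$ to give $\overline{w-z}$. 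This yields $\af W_z f = W_z \af f$ and $\af^{\dagger} W_z f = W_z \af^{\dagger} f$ for every smooth $f$, in particular on each $\Fc^2_{(k)}$.

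Next I would show that each $\Fc^2_{(k)}$ is $W_z$-invariant by induction on $k$. The base case $\Fc^2_{(1)} = \Fc^2$ is immediate, since $W_z$ sends entire functions to entire functions. For the inductive step recall that $\af^{\dagger}$ maps $\Fc^2_{(k)}$ onto $\Fc^2_{(k+1)}$: given $g = \af^{\dagger}f$ with $f \in \Fc^2_{(k)}$, the first step gives $W_z g = W_z\af^{\dagger}f = \af^{\dagger}W_z f$, and $W_z f \in \Fc^2_{(k)}$ by the induction hypothesis, so $W_z g \in \af^{\dagger}(\Fc^2_{(k)}) = \Fc^2_{(k+1)}$. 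Applying this also with $-z$ in place of $z$ and using $W_{-z} = W_z^*$, both $\Fc^2_{(k)}$ and its orthogonal complement are $W_z$-invariant; hence $\Fc^2_{(k)}$ reduces the unitary $W_z$, which is exactly the assertion $P_{(k)}W_z = W_z P_{(k)}$ for all $k$.

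Finally I would read off the identities for $\Af$ and $\Af^{\dagger}$ by applying the previous two steps termwise to the defining series. For $f \in L^2(\C,\mu)$,
\begin{align*}
\Af^{\dagger}W_z f &= \sum_{k=1}^{\infty} \frac{1}{\sqrt k}\af^{\dagger}P_{(k)}W_z f = \sum_{k=1}^{\infty} \frac{1}{\sqrt k}\af^{\dagger}W_z P_{(k)} f\\
&= \sum_{k=1}^{\infty} \frac{1}{\sqrt k} W_z \af^{\dagger}P_{(k)} f = W_z \Af^{\dagger}f,
\end{align*}
and the computation for $\Af$ is identical, using $\Af f = \sum_{k\geq 2}\frac{1}{\sqrt{k-1}}\af P_{(k)}f$.

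The computations are routine; the one thing that genuinely needs care is the logical order. Since $\Af$ and $\Af^{\dagger}$ are \emph{defined} through the projections $P_{(k)}$, one cannot literally prove $\Af W_z = W_z\Af$ from the definition and then obtain the invariance of the $\Fc^2_{(k)}$ "in particular" without arguing circularly. The clean route is to secure the invariance of the $\Fc^2_{(k)}$, equivalently $P_{(k)}W_z = W_z P_{(k)}$, first, directly from the $\af^{\dagger}$-level identity together with the construction $\Fc^2_{(k+1)} = \af^{\dagger}(\Fc^2_{(k)})$, and only afterwards to assemble the identities for $\Af$ and $\Af^{\dagger}$. An alternative to the induction is to observe that $\af$ and $\af^{\dagger}$ commuting with $W_z$ forces the counting operator $N = \af^{\dagger}\af$ to commute with $W_z$; as the $\Fc^2_{(k)}$ are precisely the eigenspaces of $N$, the unitary $W_z$ must preserve each of them, giving $P_{(k)}W_z = W_z P_{(k)}$ again.
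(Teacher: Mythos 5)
Your proof is correct, and its computational core is the same as the paper's: apply the product rule to $(W_zf)(w) = f(w-z)k_z(w)$ and exploit that $k_z$ is holomorphic in $w$. The difference lies in the logical architecture, and it is a real one. The paper computes only $\af W_z g = W_z\af g$ for $g \in \Fc^2_{(k)}$, concludes ``hence $\Af W_z = W_z\Af$'', obtains $\Af^{\dagger}W_z = W_z\Af^{\dagger}$ by taking adjoints (using $W_z^* = W_{-z}$), and only afterwards deduces $P_{(k)}W_z = W_zP_{(k)}$ from Proposition~\ref{prop:shifts_and_projections} together with the invariance of $\Fc^2_{(1)}$. You proceed in the reverse order: you first prove the invariance of every $\Fc^2_{(k)}$ by induction, using the $\af^{\dagger}$-level identity and $\af^{\dagger}(\Fc^2_{(k)}) = \Fc^2_{(k+1)}$, convert this into $P_{(k)}W_z = W_zP_{(k)}$ by the reduction argument for unitaries, and only then assemble $\Af W_z = W_z\Af$ and $\Af^{\dagger}W_z = W_z\Af^{\dagger}$ termwise from their definitions. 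Your reordering addresses a genuine subtlety that the terse original glosses over: since $\Af$ and $\Af^{\dagger}$ are defined through the projections $P_{(k)}$, passing from the pointwise identity on each $\Fc^2_{(k)}$ to the operator identity on all of $L^2(\C,\mu)$ already requires $P_{(k)}W_z = W_zP_{(k)}$, so the paper's ``hence'' silently invokes the invariance that is only recorded ``in particular'' afterwards. What the paper's arrangement buys is brevity (one computation for $\af$, adjoints for $\af^{\dagger}$, and an appeal to Proposition~\ref{prop:shifts_and_projections}); what yours buys is an unimpeachable logical order, at the price of a second, equally routine, computation for $\af^{\dagger}$ and an explicit induction. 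Your closing alternative --- that $\af$ and $\af^{\dagger}$ commuting with $W_z$ forces the counting operator $N = \af^{\dagger}\af$ to commute with $W_z$, whose eigenspaces are precisely the $\Fc^2_{(k)}$ --- is also valid and is perhaps the slickest route to the invariance.
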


\begin{proof}
Let $g \in \Fc^2_{(k)}$. Then
\[(\af W_zg)(w) = \frac{\partial}{\partial \bar{w}} \big(g(w-z)k_z(w)\big) = \frac{\partial g}{\partial \bar{w}}(w-z)k_z(w) = (W_z\af g)(w),\]
hence $\Af W_z = W_z\Af$. Taking adjoints yields the other equality since $W_z^* = W_{-z}$. The second claim follows via Proposition \ref{prop:shifts_and_projections} as $\Fc^2_{(1)}$ is invariant under $W_z$.
\end{proof}

Another useful property of the Weyl operators is
\begin{equation} \label{eq:multiplication_shift}
W_{-z}M_fW_z = M_{f(\cdot + z)}
\end{equation}
for all $f \in L^{\infty}(\C,\mu)$ and $z \in \C$; see e.g.~\cite[Lemma 17]{FuHa}.

\section{Limit operator methods} \label{sec:limit_operators}

In this section we will briefly recall the limit operator methods developed in \cite{HaSe} and show how they can be applied to operators on polyanalytic Fock spaces. The main idea of \cite{HaSe} was to formulate a collection of very general assumptions that are needed to characterize compact and Fredholm operators in terms of limit operators. The assumptions are as follows.

\begin{ass}[Space] \label{ass:1}
Let $(X,d)$ be a proper metric space of bounded geometry that satisfies property A'. Assume that $d$ is unbounded and let $\mu$ be a Radon measure on $X$.
\end{ass}

In our case we have $X = \C$, $d$ is the usual Euclidean metric and $\mu$ is the Gaussian measure defined in \eqref{eq:Gaussian}. We will skip the definitions of the more technical terms and just mention that they are more or less trivially satisfied here. For details, we refer to \cite[Example 6.5]{HaSe}.

\begin{ass}[Subspaces and projection] \label{ass:2}
Let $p \in (1,\infty)$ and let $M^p \subseteq L^p(X,\mu)$ be a closed subspace with bounded projection $P \in \BDO^p$. Moreover, assume that $M_{\1_K}P$ and $PM_{\1_K}$ are compact for all compact subsets $K \subset X$.
\end{ass}

For simplicity we only consider the case $p = 2$ here. $M^2$ will be any of the polyanalytic Fock spaces $\Fc^2_{(k)}$ or $\Fc^2_n$. In each case we choose the orthogonal projection discussed in the introduction for $P$. For these we have $P \in \BDO^2$ as shown in Corollary \ref{cor:P_BDO}. That $M_{\1_K}P$ and $PM_{\1_K}$ are compact follows from a Hilbert--Schmidt type argument and will be proven in Theorem \ref{thm:main} below.

\begin{ass}[Shifts] \label{ass:3}
Fix $x_0 \in X$. For $x \in X$ let $\phi_x \from X \to X$ be a bijective isometry with $\phi_x(x_0) = x$. Assume that $\mu \circ \phi_x \ll \mu \ll \mu \circ \phi_x$ and let $h_x$ be a measurable function such that $\abs{h_x}^p = \frac{\mathrm{d}(\mu \circ \phi_x)}{\mathrm{d}\mu}$ $\mu$-almost everywhere. Assume that the maps $x \mapsto \phi_x(y)$ and $x \mapsto h_x(y)$ are continuous for $\mu$-almost every $y \in X$. For $p \in (1,\infty)$ and $x \in X$ let $U_x^p \from L^p(X,\mu) \to L^p(X,\mu)$ be defined by $U_x^pf := (f \circ \phi_x) \cdot h_x$ and assume that $x \mapsto M_{\1_K}U_x^pP(U_x^p)^{-1}M_{\1_{K'}}$ extends continuously to the Stone-\v{C}ech compactification $\beta X$ of $X$ for all compact sets $K,K' \subset X$.
\end{ass}

We choose $x_0 = 0$ and $\phi_x(z) := z+x$ for each $x \in \C$. For $\mu \circ \phi_x$ we get
\[\mathrm{d}(\mu \circ \phi_x)(z) = \frac{1}{\pi}e^{-\abs{z+x}^2} \, \mathrm{d}z = e^{-2\Re(z\bar x) - \abs{x}^2} \mathrm{d}\mu(z)\]
and hence $\mu \circ \phi_x \ll \mu \ll \mu \circ \phi_x$ for all $x \in \C$. For $h_x$ we choose $h_x(z) := e^{-z\bar x - \frac{1}{2}\abs{x}^2} = k_{-x}(z)$. For $U_x^2$ we then have
\[(U_x^2f)(z) = f(z+x)k_{-x}(z) = (W_{-x}f)(z).\]
The last assumption trivially holds as $U_x^2 = W_{-x}$ and $P$ commute in our case by Proposition \ref{prop:A_commutes_with_Weyl_operators}.

We have to admit that in the following the notation is a bit unfortunate as the sign convention is conflicting between \cite{HaSe} and earlier works such as \cite{BaIs} and \cite{FuHa}. We will use the former convention because it makes it compatible with the boundary values of the Berezin transform introduced later and consequently lets some results appear much cleaner (cf.~Lemma \ref{lem:constant_limit_operators} or Lemma \ref{lem:VO_limit_operators} below). Anyway, let $x \in \beta\C \setminus \C$ and choose a net $(z_{\gamma})$ in $\C$ that converges to $x \in \beta\C \setminus \C$. For $H := \Fc^2_{(k)}$ or $H := \Fc^2_n$ and $T \in \Lc(H)$ band-dominated we define
\[T_xg := \lim\limits_{z_{\gamma} \to x} W_{-z_{\gamma}}TW_{z_{\gamma}}g\]
for $g \in H$. This strong limit is guaranteed to exist and independent of the chosen net as shown in \cite[Theorem 4.11]{HaSe}. $T_x$ is called a limit operator of $T \in \Lc(H)$. The main results of \cite{HaSe} are now as follows:

\begin{thm}[Corollary 4.24 of \cite{HaSe}]
Assume that the Assumptions \ref{ass:1}-\ref{ass:3} are satisfied. Then $K \in \Lc(M^p)$ is compact if and only if $K$ is band-dominated and $K_x = 0$ for all $x \in \beta X \setminus X$.
\end{thm}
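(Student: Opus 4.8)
The plan is to prove the two implications separately, with the forward direction being routine and essentially all of the work concentrated in the backward one. The organizing principle is the essential-norm formula $\norm{K}_{\mathrm{ess}} = \sup_{x \in \beta X \setminus X} \norm{K_x}$ for band-dominated $K$; the asserted equivalence is precisely its specialization to the case where the right-hand side is zero. Throughout I would freely use that the strong limits defining $K_x$ exist and are net-independent (guaranteed here by \cite[Theorem 4.11]{HaSe}) and that the band-dominated operators form a $C^*$-algebra containing all compacts.

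For the easy direction, suppose $K$ is compact. Then $K$ is band-dominated, since $KP$ is compact on $L^2(\C,\mu)$ and $\BDO^2$ contains all compact operators. To see that every limit operator vanishes, fix $x \in \beta\C \setminus \C$ and a net $z_\gamma \to x$, so that $\abs{z_\gamma} \to \infty$. Because $W_{z_\gamma}$ is a translation modulated by the normalized reproducing kernel $k_{z_\gamma}$, a direct estimate shows $W_{z_\gamma}g \rightharpoonup 0$ weakly for every $g$. Compactness of $K$ then turns this weakly null net into a norm-null one, and since $W_{-z_\gamma}$ is unitary we get $\norm{W_{-z_\gamma}KW_{z_\gamma}g} = \norm{KW_{z_\gamma}g} \to 0$, whence $K_x = 0$.

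For the hard direction, assume $K$ is band-dominated with $K_x = 0$ for all $x$ and aim to show $\norm{K}_{\mathrm{ess}} = 0$. I would argue by contradiction: if $K$ were not compact, there is a normalized sequence $(g_n)$ with $g_n \rightharpoonup 0$ and $\norm{Kg_n} \geq \delta > 0$. Assumption \ref{ass:2} provides that $PM_{\1_K}$ is compact for compact $K \subset \C$; since $g_n = Pg_n$ and $g_n \rightharpoonup 0$, the truncated inputs $PM_{\1_{B(0,R)}}g_n \to 0$ in norm for each fixed $R$, so the mass of $g_n$ that $K$ sees escapes to infinity. After approximating $K$ by a band operator up to $\delta/4$, the finite band width lets one locate centers $z_n$ with $\abs{z_n} \to \infty$ along which a fixed-radius localization $M_{\1_{B(z_n,r)}}g_n$ still yields output of norm $\geq \delta/2$. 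Passing to a subnet with $z_n \to x \in \beta\C \setminus \C$ and conjugating by $W_{-z_n}$ transports this persistent mass into a neighborhood of the origin, producing a nonzero vector in the range of $K_x$ and contradicting $K_x = 0$.

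The main obstacle is exactly this localization-and-gluing step, and it is here that property A' from Assumption \ref{ass:1} is indispensable. Property A' supplies partitions of unity subordinate to a uniformly bounded cover whose members asymptotically commute with band operators, which is what lets one pass rigorously from the global operator to its local pieces with only an arbitrarily small (or compact) error, and dually it underlies the quantitative estimate $\norm{K}_{\mathrm{ess}} \le \sup_x \norm{K_x}$. Arranging the extraction of the centers $z_n$ compatibly with the band width and the partition of unity --- so that the transported mass genuinely survives in the strong limit defining $K_x$ rather than leaking across cell boundaries --- is the delicate point; the richness (self-similarity) of the family of limit operators and the net-independence of the strong limit, both from \cite{HaSe}, are what keep this bookkeeping consistent. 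The forward direction, by contrast, needs none of this machinery.
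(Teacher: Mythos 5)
First, a remark on the comparison target: the paper you are working from does not prove this statement at all --- it is imported verbatim as Corollary 4.24 of \cite{HaSe}, and the paper's own contribution (Theorem \ref{thm:main}) is merely the verification of Assumptions \ref{ass:1}--\ref{ass:3} for polyanalytic Fock spaces. So your attempt must be measured against the proof in \cite{HaSe}. Your forward direction is fine and standard: compact operators lie in $\BDO^p$, shifted vectors tend weakly to zero, and compactness converts weakly null nets into norm null nets.

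The backward direction, however, contains a genuine gap, and it sits exactly at the step you call the main obstacle. You claim that for a band operator, the finite band width together with the fact that the mass of the witnesses $g_n$ escapes to infinity ``lets one locate centers $z_n$ with $\abs{z_n}\to\infty$ along which a fixed-radius localization $M_{\1_{B(z_n,r)}}g_n$ still yields output of norm $\geq \delta/2$.'' This is false as stated: nothing prevents the witnesses from \emph{spreading}. Concretely, in $\Fc^2$ take $K=I$ (a band operator of width $0$, not compact) and $g_n(w)=w^n/\sqrt{n!}$: these are weakly null unit vectors with $\norm{Kg_n}=1$ whose mass concentrates near the circle $\abs{w}=\sqrt{n}$ but is spread uniformly over it, so that $\sup_{z\in\C}\norm{M_{\1_{B(z,r)}}g_n}\to 0$ for every fixed $r$; consequently $\norm{KM_{\1_{B(z_n,r)}}g_n}\leq\norm{K}\norm{M_{\1_{B(z_n,r)}}g_n}\to 0$ for \emph{every} choice of centers, and your construction produces the zero vector in the range of $K_x$ --- no contradiction. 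Of course $K=I$ violates the hypothesis $K_x=0$, but your extraction step is claimed to follow from band width and escaping mass alone, and the example shows it does not; to rule out spreading witnesses you must use the hypothesis that \emph{all} limit operators vanish, and converting that pointwise, strong-limit hypothesis into control over spread-out vectors is precisely the content of the theorem. The proof in \cite{HaSe} therefore runs in the opposite direction: using Assumption \ref{ass:3} (norm-continuity up to $\beta X$ of the localized operators $M_{\1_K}U_x^pP(U_x^p)^{-1}M_{\1_{K'}}$), the compactness of $\beta X\setminus X$, and the partitions of unity furnished by property A', one upgrades ``$K_x=0$ for all $x$'' to a \emph{uniform} estimate of the type $\norm{KPM_{\1_{X\setminus B(0,R)}}}\to 0$ as $R\to\infty$; compactness then follows because $KPM_{\1_{B(0,R)}}$ is compact by Assumption \ref{ass:2}, so $K$ is a norm limit of compact operators. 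Your sketch invokes this uniform estimate (in the guise of $\norm{K}_{\mathrm{ess}}\leq\sup_x\norm{K_x}$, which you state but never prove) inside an argument that was supposed to establish it.

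Two smaller points. You silently specialize to $p=2$, $X=\C$ and shifts commuting with $P$; the statement is for general $M^p\subseteq L^p(X,\mu)$, $p\in(1,\infty)$, where $U_x^p$ need not leave $M^p$ invariant, so limit operators must be defined through $KP$ acting on $L^p$, and unitarity is unavailable (reflexivity of $L^p$ does salvage the weak-compactness steps). Also, the equality $\norm{K}_{\mathrm{ess}}=\sup_{x}\norm{K_x}$ you take as organizing principle is finer than what is needed, and than what is proved at this level of generality; two-sided estimates suffice for the compactness characterization.
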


\begin{thm}[Theorem 4.38 of \cite{HaSe}]
Assume that the Assumptions \ref{ass:1}-\ref{ass:3} are satisfied. Further assume that $T \in \Lc(M^p)$ is band-dominated. Then $T$ is Fredholm if and only if $T_x$ is invertible for all $x \in \beta X \setminus X$.
\end{thm}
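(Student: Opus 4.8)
The plan is to work in the Calkin algebra $\Lc(M^p)/\Kc(M^p)$ and to characterize Fredholmness as invertibility there, using the preceding compactness criterion (Corollary 4.24 of \cite{HaSe}) as the basic dictionary between operators and their limit operators. The two implications are handled separately: the forward direction is essentially formal, while the converse carries all of the difficulty.

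For the necessity direction I would argue as follows. Suppose $T$ is Fredholm and let $S \in \Lc(M^p)$ be a parametrix, so that $ST - I$ and $TS - I$ are compact. One first checks that the passage $T \mapsto T_x$ is unital and multiplicative on band-dominated operators, i.e.\ $I_x = I$ and $(TS)_x = T_x S_x$; this follows from the strong convergence defining $T_x$ together with the uniform boundedness of the conjugates $W_{-z_\gamma} T W_{z_\gamma}$. Since compact operators have vanishing limit operators (the ``only if'' part of the compactness theorem), applying $(\cdot)_x$ to $ST - I$ and $TS - I$ gives $S_x T_x = I = T_x S_x$ for every $x \in \beta X \setminus X$, so each $T_x$ is invertible with inverse $S_x$.

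The converse is the substantial part. Assume every $T_x$ is invertible; the goal is to produce a parametrix for $T$, equivalently to show that $T$ is invertible modulo $\Kc(M^p)$. I would proceed through the lower norm $\inf_{\norm{g} = 1}\norm{Tg}$ and its localized version: $T$ is Fredholm precisely when there exist $c > 0$ and a compact $K \subset X$ such that $\norm{Tg} \geq c\norm{g}$ and $\norm{T^*g} \geq c\norm{g}$ for all $g$ whose mass lies outside $K$, which reduces Fredholmness to a uniform ``bounded below at infinity'' statement for both $T$ and $T^*$. The crucial step is to relate this uniform lower bound at infinity to the lower norms of the limit operators: by a compactness-of-nets argument the infimum over large $\abs{z}$ of the localized lower norms is realized, in the limit, by some limit operator $T_x$, so invertibility of all $T_x$ should force the lower norms of $T$ and $T^*$ to stay bounded away from zero near infinity.

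The main obstacle is exactly this last passage, because a priori one only knows that each individual $T_x$ is bounded below, not that the bounds are uniform in $x$; without uniformity the lower norm of $T$ near infinity could still degenerate. This is where property A' in Assumption \ref{ass:1} enters: it ensures the operator class is ``rich'' enough that invertibility of all limit operators automatically yields a uniform lower bound on their inverses, the analogue of the Lindner--Seidel uniform-boundedness theorem in the discrete band-dominated setting. Granting this uniform bound, the localized lower-norm criterion applies to both $T$ and $T^*$, and $T$ is Fredholm. I would expect most of the work to lie in verifying the uniformity and in assembling the local invertibility data into a genuine global parametrix.
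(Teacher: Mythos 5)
First, a point of order: the paper you were given does not prove this statement at all --- it is quoted verbatim as Theorem 4.38 of \cite{HaSe} and used as a black box, so your attempt can only be measured against the argument in that reference, whose broad outline (necessity by an algebraic/lower-bound argument, sufficiency via localized lower norms and a Lindner--Seidel type uniformity theorem resting on property A') your sketch does reproduce. Within that outline, however, your necessity direction has a concrete gap: you apply multiplicativity of $T \mapsto T_x$ to the products $ST$ and $TS$, but limit operators are only guaranteed to exist for \emph{band-dominated} operators, and nothing ensures that a parametrix $S$ of a band-dominated operator is band-dominated, so $S_x$ is simply undefined. (For $p=2$ one could invoke inverse closedness of the $C^*$-algebra $\BDO^2$ modulo compacts, but the theorem is stated for all $p \in (1,\infty)$, where inverse closedness is itself a nontrivial theorem, not a formality.) The gap can be repaired without ever forming $S_x$: from $ST = I + K$ one has $(W_{-z_\gamma}SW_{z_\gamma})(W_{-z_\gamma}TW_{z_\gamma})f \to f$ strongly (compact operators have vanishing limit operators), and since $\norm{W_{-z_\gamma}SW_{z_\gamma}} = \norm{S}$ while $W_{-z_\gamma}TW_{z_\gamma}f \to T_xf$, one gets $(W_{-z_\gamma}SW_{z_\gamma})T_xf \to f$ and hence $\norm{f} \leq \norm{S}\,\norm{T_xf}$; so each $T_x$ is bounded below, uniformly in $x$. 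Applying the same reasoning to $S^*T^* = (TS)^* = I + (K')^*$, using that $T^*$ is band-dominated on the dual and that $(T^*)_x = (T_x)^*$, shows $(T_x)^*$ is bounded below as well, whence $T_x$ is invertible --- and as a bonus this yields the uniform bound $\sup_x \norm{T_x^{-1}} \leq \norm{S}$, which is exactly the form in which necessity gets used.

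The more serious issue is in your converse. The assertion that property A' ``ensures'' that invertibility of every $T_x$ automatically gives $\sup_x \norm{T_x^{-1}} < \infty$ is not a general principle you may cite: it \emph{is} the substantive content of Theorem 4.38, the metric-measure-space analogue of the Lindner--Seidel theorem, and in \cite{HaSe} its proof constitutes the bulk of the Fredholm section --- a quantitative lower-norm argument in which property A' enters through the construction of partitions of unity with uniformly small oscillation used to localize band-dominated operators. By ``granting this uniform bound'' you are assuming precisely the hard implication, so what you have is an accurate road map of the proof rather than a proof. Two further technical points in the same direction: (a) your localized lower-norm criterion cannot be phrased in terms of elements of $M^p$ supported outside a compact set, since in spaces like $\Fc^2_{(k)}$ no nonzero element vanishes on an open set; the criterion must be formulated on $L^p$ with cut-offs $M_{\1_{X \setminus K}}$ and the projection $P$, which is where the compactness of $M_{\1_K}P$ and $PM_{\1_K}$ from Assumption \ref{ass:2} is needed. (b) Even granting the uniform bound, passing from ``$T$ and $T^*$ uniformly bounded below at infinity'' to Fredholmness (i.e., from invertibility at infinity to the existence of a parametrix modulo $\Kc(M^p)$) is a separate step that still has to be carried out.
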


Translated to the case at hand we obtain the following theorem as a special case.

\begin{thm} \label{thm:main}
Let $H := \Fc^2_{(k)}$ or $H := \Fc^2_n$ for some $k,n \geq 1$.
\begin{itemize}
	\item[(a)] $K \in \Lc(H)$ is compact if and only if $K$ is band-dominated and $K_x = 0$ for all $x \in \beta\C \setminus \C$.
	\item[(b)] Let $T \in \Lc(H)$ be band-dominated. Then $T$ is Fredholm if and only if $T_x$ is invertible for all $x \in \beta\C \setminus \C$.
\end{itemize}
\end{thm}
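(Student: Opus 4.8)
The plan is to recognize Theorem \ref{thm:main} as the specialization of \cite[Corollary 4.24]{HaSe} and \cite[Theorem 4.38]{HaSe} to the concrete data $X = \C$, $\phi_x(z) = z+x$, $h_x = k_{-x}$ and $U_x^2 = W_{-x}$ fixed in the preceding discussion. Almost all of the hypotheses (Assumptions \ref{ass:1}--\ref{ass:3}) have already been checked there: Assumption \ref{ass:1} holds by \cite[Example 6.5]{HaSe}; for Assumption \ref{ass:2} we have $P \in \BDO^2$ by Corollary \ref{cor:P_BDO}; and the last part of Assumption \ref{ass:3} reduces to the commutation $W_{-x}P = PW_{-x}$ from Proposition \ref{prop:A_commutes_with_Weyl_operators}. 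The one remaining item is the compactness statement in Assumption \ref{ass:2}, namely that $M_{\1_K}P$ and $PM_{\1_K}$ are compact for every compact $K \subset \C$. Once this is in place, parts (a) and (b) follow by quoting the two cited theorems.

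The plan for the remaining item is a Hilbert--Schmidt estimate. Take $P = P_{(k)}$ first; it is the integral operator with kernel $L_{k-1}^0(\abs{z-w}^2)e^{z\bar w}$ against $\mathrm{d}\mu(w)$, so $M_{\1_K}P$ has kernel $\1_K(z)L_{k-1}^0(\abs{z-w}^2)e^{z\bar w}$. The Hilbert--Schmidt norm of such an operator on $L^2(\C,\mu)$ is the double integral of $\1_K(z)\abs{L_{k-1}^0(\abs{z-w}^2)}^2\abs{e^{z\bar w}}^2$ against $\mathrm{d}\mu(w)\,\mathrm{d}\mu(z)$. The key simplification is the Gaussian cancellation $\abs{e^{z\bar w}}^2 e^{-\abs{z}^2}e^{-\abs{w}^2} = e^{-\abs{z-w}^2}$, coming from $2\Re(z\bar w) - \abs{z}^2 - \abs{w}^2 = -\abs{z-w}^2$. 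After substituting $u = z-w$ in the inner integral, this factors as $\frac{1}{\pi^2}$ times the Lebesgue area of $K$ times $\int_{\C} \abs{L_{k-1}^0(\abs{u}^2)}^2 e^{-\abs{u}^2}\,\mathrm{d}u$, which is finite because $L_{k-1}^0$ is a polynomial and the Gaussian weight dominates. Hence $M_{\1_K}P_{(k)}$ is Hilbert--Schmidt, in particular compact; since $P_{(k)}$ is self-adjoint, taking adjoints gives compactness of $P_{(k)}M_{\1_K}$ as well. For $P = P_n$ the argument is identical, replacing $L_{k-1}^0$ by $L_{n-1}^1$, which is again a polynomial.

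With the full set of Assumptions \ref{ass:1}--\ref{ass:3} verified, the two theorems of \cite{HaSe} apply, and it remains only to observe that the limit operators $T_x$ defined above by the strong limit of $W_{-z_\gamma}TW_{z_\gamma}$ coincide with the limit operators of \cite{HaSe} under the identification $U_x^2 = W_{-x}$, so that the conclusions translate directly into statements (a) and (b). I do not anticipate a genuine obstacle here: the argument is essentially bookkeeping, and the only points requiring care are using the correct Hilbert--Schmidt norm for operators given by a kernel against the Gaussian measure $\mu$ rather than against Lebesgue measure, and keeping the sign convention consistent with the choice $U_x^2 = W_{-x}$ flagged in the text.
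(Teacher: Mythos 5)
Your proposal is correct and follows essentially the same route as the paper: verify Assumptions \ref{ass:1}--\ref{ass:3} (all but one already settled in the preceding discussion) and supply the missing compactness of $M_{\1_K}P$ and $PM_{\1_K}$ via the Hilbert--Schmidt estimate with the Gaussian cancellation $\abs{e^{z\bar w}}^2e^{-\abs{z}^2}e^{-\abs{w}^2} = e^{-\abs{z-w}^2}$. The only cosmetic differences are that you handle $P_n$ by repeating the kernel computation with $L_{n-1}^1$ (which the paper explicitly flags as a valid alternative to its orthogonal-sum argument) and that you obtain the second compactness statement by taking adjoints rather than by a ``similar argument,'' which is if anything slightly cleaner.
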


\begin{proof}
We only need to show that the Assumptions \ref{ass:1}-\ref{ass:3} are satisfied for the true polyanalytic Fock spaces $\Fc^2_{(k)}$. For $\Fc_n^2 = \bigoplus\limits_{k =1}^n \Fc_{(k)}^2$ the theorem then follows as well because the theory ist stable under orthogonal sums (compare with the remark at the end of \cite[Example 6.8]{HaSe}). Alternatively, the proof below also works for $\Fc_n^2$.

So let $H := \Fc^2_{(k)}$ for some $k \in \N$. After the discussion above, the only condition left to prove is the compactness of $M_{\1_K}P_{(k)}$ and $P_{(k)}M_{\1_K}$ for compact sets $K \subset \C$. We have
\[P_{(k)}M_{\1_{K}}f(z) = \int_K f(w)L_{k-1}^0(\abs{z-w}^2)e^{z\bar{w}} \, \mathrm{d}\mu(w)\]
for $f \in L^2(\C,\mu)$, $z \in \C$. This integral operator is Hilbert--Schmidt because
\begin{align*}
\int_{\C} \int_{K} \abs{L_{k-1}^0(\abs{z-w}^2)e^{z\bar{w}}}^2 \, \mathrm{d}\mu(w) \, \mathrm{d}\mu(z) &= \frac{1}{\pi^2} \int_{\C} \int_{K} \abs{L_{k-1}^0(\abs{z-w}^2)}^2e^{-\abs{z-w}^2} \, \mathrm{d}w \, \mathrm{d}z\\
&= \frac{1}{\pi^2} \int_{\C} \int_{K} \abs{L_{k-1}^0(\abs{z}^2)}^2e^{-\abs{z}^2} \, \mathrm{d}w \, \mathrm{d}z\\
&< \infty.
\end{align*}
A similar argument shows that $M_{\1_{K}}P_{(k)}$ is also Hilbert--Schmidt.
\end{proof}

\section{Generalized Berezin transforms} \label{sec:Berezin}

On $\Fc^2_1 = \Fc^2$ one defines the Berezin transform $\Bc(T)$ of an operator $T \in \Lc(\Fc^2_1)$ as
\[[\Bc(T)](z) := \sp{Tk_z}{k_z}.\]
From the Bauer--Isralowitz theorem we know that the compact operators on $\Fc^2_1$ can be characterized via the Berezin transform. Namely, $T \in \Lc(\Fc^2_1)$ is compact if and only if $T$ is in the Toeplitz algebra and $\Bc(T) \in C_0(\C)$ \cite[Theorem 1.1]{BaIs}. Note that in this case the Toeplitz algebra coincides with the algebra of band-dominated operators $\Ac^2_1$ \cite[Theorem 4.20]{BaFu}.

As $\Fc^2_n$ is also a reproducing kernel Hilbert space, one can of course ask the same question: Can the compact operators be characterized via the Berezin transform? Unfortunately, the answer ist no for $n \geq 2$ (at least within $\Ac^2_n$). Define the normalized reproducing kernels $k_{z,n}$ as usual:
\[k_{z,n}(w) := \frac{K(w,z)}{\norm{K(\cdot,z)}} = \frac{1}{\sqrt{n}}L_{n-1}^1(\abs{z-w}^2)e^{w\bar{z}-\frac{1}{2}|z|^2}.\]
Then
\[\sp{(P_{(1)}-P_{(2)})k_{z,n}}{k_{z,n}} = \sp{P_{(1)}k_{z,n}}{k_{z,n}} - \sp{P_{(2)}k_{z,n}}{k_{z,n}} = \frac{1}{n} - \frac{1}{n} = 0\]
for all $z \in \C$. However, $P_{(1)}-P_{(2)}$ is obviously not compact.

Nevertheless, one can still characterize compactness on $\Fc^2_n$ with a real analytic function, but it turns out to be matrix-valued. The set of complex $n \times n$ matrices will be denoted by $\C^{n \times n}$.

\begin{defn} \label{defn:generalized_Berezin}
Let $k,n \in \N$. For $T \in \Lc(\Fc^2_{(k)})$ we define
\[\Bc_{(k)}(T) \from \C \to \C, \quad [\Bc_{(k)}(T)](z) := \sp{Tl_{z,k}}{l_{z,k}}\]
and for $T \in \Lc(\Fc^2_n)$ we define
\[\Bc_n(T) \from \C \to \C^{n \times n}, \quad [\Bc_n(T)](z) := \begin{pmatrix} \sp{Tl_{z,1}}{l_{z,1}} & \hdots & \sp{Tl_{z,n}}{l_{z,1}} \\ \vdots & & \vdots \\ \sp{Tl_{z,1}}{l_{z,n}} & \hdots & \sp{Tl_{z,n}}{l_{z,n}} \end{pmatrix},\]
where $l_{z,k} \in \Fc^2_{(k)}$ is given by $l_{z,k}(w) = \frac{1}{\sqrt{(k-1)!}}(\bar{w}-\bar{z})^{k-1}e^{w\bar{z}-\frac{1}{2}|z|^2}$. We will also use the notation $\widetilde{T} $ for the Berezin transform of $P_1T|_{\Fc^2_1} \from \Fc^2_1 \to \Fc^2_1$. For $f \in L^{\infty}(\C,\mu)$ we will use the abbreviations $\Bc_n(f) := \Bc_n(T_{f,n})$, $\Bc_{(k)}(f) := \Bc_{(k)}(T_{f,(k)})$ and $\tilde{f} := \Bc_1(f)$.
\end{defn}

Note that $l_{z,k}$ is indeed in $\Fc^2_{(k)}$ as
\begin{equation} \label{eq:k_and_l}
l_{z,k}(w) = (\Af^{\dagger})^{k-1}k_z(w),
\end{equation}
which is easily seen by induction. These generalized Berezin transforms thus inherit all the basic properties of the usual Berezin transform $\Bc = \Bc_1$. In particular, $\Bc_{(k)} \from \Lc(\Fc^2_{(k)}) \to C(\C)$ and $\Bc_n \from \Lc(\Fc^2_n) \to C(\C \to \C^{n \times n})$ are injective bounded linear operators and their images consist of real analytic, Lipschitz continuous functions.

This also makes it evident why this is suitable to characterize compactness. Namely, $T \in \Lc(\Fc^2_n)$ is compact if and only if $P_{(k)}T|_{\Fc^2_{(j)}} \in \Lc(\Fc^2_{(j)},\Fc^2_{(k)})$ is compact for all $j,k = 1,\ldots,n$, which is equivalent to $P_{(1)}\Af^{k-1}T(\Af^{\dagger})^{j-1}|_{\Fc^2_{(1)}} \in \Lc(\Fc^2_{(1)})$ being compact. As the Berezin transform of $P_{(1)}\Af^{k-1}T(\Af^{\dagger})^{j-1}|_{\Fc^2_{(1)}}$ is given by
\[\sp{P_{(1)}\Af^{k-1}T(\Af^{\dagger})^{j-1}k_z}{k_z} = \sp{Tl_{z,j}}{l_{z,k}},\]
the following theorem now follows directly from the Bauer--Isralowitz theorem on $\Fc^2$ \cite[Theorem 1.1]{BaIs}, \cite[Theorem 4.20]{BaFu} mentioned above and Corollary \ref{cor:A^2_isomorphic}.

\begin{thm} \label{thm:compact_Berezin}
Let $j,k,n \in \N$.
\begin{itemize}
	\item[(a)] $T \in \Lc(\Fc^2_{(j)},\Fc^2_{(k)})$ is compact if and only if $TP_{(j)} \in \BDO^2$ and $z \mapsto \sp{Tl_{z,j}}{l_{z,k}}$ is in $C_0(\C)$.
	\item[(b)] $T \in \Lc(\Fc^2_n)$ is compact if and only if $T \in \Ac^2_n$ and $\Bc_n(T) \in C_0(\C \to \C^{n \times n})$.
\end{itemize}
\end{thm}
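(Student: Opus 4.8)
The plan is to reduce everything to the Bauer--Isralowitz theorem on the analytic Fock space $\Fc^2_1$, exploiting that $(\Af^{\dagger})^{m-1}$ restricts to an isometric isomorphism $\Fc^2_{(1)} \to \Fc^2_{(m)}$ with inverse $\Af^{m-1}$ (the content of the creation--annihilation structure recalled in the introduction). I would prove (a) first and then assemble (b) block by block using the orthogonal decomposition $\Fc^2_n = \bigoplus_{k=1}^n \Fc^2_{(k)}$.

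For (a), given $T \in \Lc(\Fc^2_{(j)},\Fc^2_{(k)})$ set
\[ S := P_{(1)}\Af^{k-1}T(\Af^{\dagger})^{j-1}\big|_{\Fc^2_{(1)}} \in \Lc(\Fc^2_{(1)}). \]
Since $S$ is obtained from $T$ by pre- and post-composition with the isometric isomorphisms $(\Af^{\dagger})^{j-1} \from \Fc^2_{(1)} \to \Fc^2_{(j)}$ and $\Af^{k-1} \from \Fc^2_{(k)} \to \Fc^2_{(1)}$, the operator $T$ is compact if and only if $S$ is compact. By the Bauer--Isralowitz theorem on $\Fc^2_1$, $S$ is compact if and only if $S \in \Ac^2_{(1)}$ and $\Bc_1(S) \in C_0(\C)$. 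To identify the Berezin transform I use \eqref{eq:k_and_l} to write $(\Af^{\dagger})^{j-1}k_z = l_{z,j}$ and $(\Af^{\dagger})^{k-1}k_z = l_{z,k}$, together with the adjoint relation $(\Af^{k-1})^{*} = (\Af^{\dagger})^{k-1}$, giving
\[ [\Bc_1(S)](z) = \sp{\Af^{k-1}Tl_{z,j}}{k_z} = \sp{Tl_{z,j}}{(\Af^{\dagger})^{k-1}k_z} = \sp{Tl_{z,j}}{l_{z,k}}, \]
which is exactly the function in the statement. It then remains to match the band-dominated conditions, and here I would establish the two factorizations
\[ SP_{(1)} = \big(\Af^{k-1}P_{(k)}\big)\big(TP_{(j)}\big)\big((\Af^{\dagger})^{j-1}P_{(1)}\big), \quad TP_{(j)} = \big((\Af^{\dagger})^{k-1}P_{(1)}\big)\big(SP_{(1)}\big)\big(\Af^{j-1}P_{(j)}\big), \]
where the inserted projections are absorbed because each intermediate image already lies in the correct true polyanalytic space. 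The outer factors are finite products of the operators $\Af P_{(m)}$, $\Af^{\dagger}P_{(m)}$ shown to be band-dominated in Proposition \ref{prop:AP_BDO}, and since $\BDO^2$ is a $C^*$-algebra these identities give $S \in \Ac^2_{(1)}$ if and only if $TP_{(j)} \in \BDO^2$, completing (a).

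For (b), I first note that $T \in \Lc(\Fc^2_n)$ is compact if and only if every block $P_{(k)}T|_{\Fc^2_{(j)}}$, $1 \le j,k \le n$, is compact. Applying (a) to each block, this is equivalent to $P_{(k)}TP_{(j)} \in \BDO^2$ together with $z \mapsto \sp{Tl_{z,j}}{l_{z,k}} \in C_0(\C)$ for all $j,k$ (using that $l_{z,k} \in \Fc^2_{(k)}$, so $P_{(k)}$ is invisible in the inner product). These $n^2$ functions are precisely the entries of $\Bc_n(T)$, so the Berezin condition is equivalent to $\Bc_n(T) \in C_0(\C \to \C^{n\times n})$. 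For the algebra condition I use $P_n = \sum_{k=1}^n P_{(k)}$ and $P_nTP_n = TP_n$ to write $TP_n = \sum_{j,k=1}^n P_{(k)}TP_{(j)}$; together with $P_{(m)} \in \BDO^2$ (Corollary \ref{cor:P_BDO}) and the algebra property, this shows $T \in \Ac^2_n$ if and only if all blocks $P_{(k)}TP_{(j)}$ are band-dominated. Assembling the two equivalences yields (b).

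The main obstacle is bookkeeping rather than conceptual: because $\Af$ and $\Af^{\dagger}$ need not themselves lie in $\BDO^2$, every occurrence must be paired with a projection into the form $\Af P_{(m)}$ or $\Af^{\dagger}P_{(m)}$ so that Proposition \ref{prop:AP_BDO} applies, and one must verify carefully that the inserted projections are genuinely absorbed at each step, i.e.\ that the intermediate vectors already live in the expected true polyanalytic space. Once this factorization is arranged correctly, the equivalence of the band-dominated conditions and the identification of the Berezin transform both drop out, and the compactness part is immediate from the isometric equivalence to the analytic case.
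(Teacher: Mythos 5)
Your proposal is correct and follows essentially the same route as the paper: the paper likewise reduces to the analytic case via $S = P_{(1)}\Af^{k-1}T(\Af^{\dagger})^{j-1}|_{\Fc^2_{(1)}}$, identifies $\Bc_1(S)(z) = \sp{Tl_{z,j}}{l_{z,k}}$ using \eqref{eq:k_and_l}, and invokes the Bauer--Isralowitz theorem together with \cite[Theorem 4.20]{BaFu} and Corollary \ref{cor:A^2_isomorphic}. Your explicit factorizations of $SP_{(1)}$ and $TP_{(j)}$ into products of the band-dominated operators $\Af P_{(m)}$, $\Af^{\dagger}P_{(m)}$ are exactly the bookkeeping the paper delegates to Corollary \ref{cor:A^2_isomorphic} (and its extension to operators between different spaces $\Fc^2_{(j)} \to \Fc^2_{(k)}$), so you have simply spelled out what the paper leaves implicit.
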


As $\Af^{\dagger}$ commutes with the Weyl operators (see Proposition \ref{prop:A_commutes_with_Weyl_operators}), $\Bc_n$ also preserves the shift action.

\begin{lem} \label{lem:Weyl_operators_and_Berezin_transform}
Let $T \in \Lc(\Fc^2_n)$. We have
\[[\Bc_n(W_{-\zeta}TW_{\zeta})](z) = [\Bc_n(T)](z+\zeta)\]
for all $\zeta,z \in \C$.
\end{lem}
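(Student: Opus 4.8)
The plan is to prove the identity entrywise and to reduce everything to the commutation relation $\Af^{\dagger}W_{\zeta} = W_{\zeta}\Af^{\dagger}$ from Proposition \ref{prop:A_commutes_with_Weyl_operators} together with the identity \eqref{eq:k_and_l}. Fix $z,\zeta \in \C$ and recall that the $(p,q)$ entry of $[\Bc_n(S)](z)$ is $\sp{Sl_{z,q}}{l_{z,p}}$. Applying this to $S = W_{-\zeta}TW_{\zeta}$ and using $W_{-\zeta}^* = W_{\zeta}$, I would first shift the outer Weyl operator into the second slot of the inner product, so that the $(p,q)$ entry of $[\Bc_n(W_{-\zeta}TW_{\zeta})](z)$ becomes $\sp{TW_{\zeta}l_{z,q}}{W_{\zeta}l_{z,p}}$. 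The whole statement thus reduces to understanding the single vector $W_{\zeta}l_{z,k}$.

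To compute it, I would use \eqref{eq:k_and_l}, i.e.\ $l_{z,k} = (\Af^{\dagger})^{k-1}k_z$, together with $\Af^{\dagger}W_{\zeta} = W_{\zeta}\Af^{\dagger}$, to pull the Weyl operator through the ladder operators: $W_{\zeta}l_{z,k} = (\Af^{\dagger})^{k-1}W_{\zeta}k_z$. It then suffices to compute $W_{\zeta}k_z$ on $\Fc^2_1$. Since $k_z = W_z k_0$ with $k_0 \equiv 1$ the constant function, relation \eqref{eq:W_zW_w} gives $W_{\zeta}k_z = W_{\zeta}W_z k_0 = e^{-i\Im(\zeta\bar z)}W_{z+\zeta}k_0 = e^{-i\Im(\zeta\bar z)}k_{z+\zeta}$. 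Writing $\lambda := e^{-i\Im(\zeta\bar z)}$, this is a unimodular scalar that is the same for every $k$, and hence $W_{\zeta}l_{z,k} = \lambda\, l_{z+\zeta,k}$.

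Substituting back, and using that the inner product is linear in the first and conjugate-linear in the second argument, I obtain
\[\sp{TW_{\zeta}l_{z,q}}{W_{\zeta}l_{z,p}} = \sp{T\lambda l_{z+\zeta,q}}{\lambda l_{z+\zeta,p}} = \lambda\bar{\lambda}\sp{Tl_{z+\zeta,q}}{l_{z+\zeta,p}} = \sp{Tl_{z+\zeta,q}}{l_{z+\zeta,p}},\]
since $\abs{\lambda} = 1$. The right-hand side is precisely the $(p,q)$ entry of $[\Bc_n(T)](z+\zeta)$, which establishes the claim.

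There is no serious obstacle here; the argument is entirely formal once the two ingredients are in place. The only point that requires a moment of care is the phase: $W_{\zeta}l_{z,k}$ differs from $l_{z+\zeta,k}$ by the unimodular factor $\lambda$, but because the same $\lambda$ occurs in both slots of the sesquilinear form, it is cancelled by the conjugation in the second argument. Note that only $\abs{\lambda} = 1$ is needed, so the exact value of the phase is immaterial.
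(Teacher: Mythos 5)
Your proof is correct and follows essentially the same route as the paper's: both move $W_{\zeta}$ into the second slot via $W_{-\zeta}^* = W_{\zeta}$, use \eqref{eq:k_and_l} together with Proposition \ref{prop:A_commutes_with_Weyl_operators} to commute the ladder operators past the Weyl operator, apply \eqref{eq:W_zW_w} to get $W_{\zeta}k_z = e^{-i\Im(\zeta\bar z)}k_{z+\zeta}$, and cancel the common unimodular phase by sesquilinearity. The only difference is cosmetic: you compute $W_{\zeta}l_{z,k} = \lambda\, l_{z+\zeta,k}$ as a standalone identity before substituting, while the paper carries the factor $e^{-i\Im(\zeta\bar z)}W_{z+\zeta}(\Af^{\dagger})^{k-1}\1$ directly through the inner product.
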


\begin{proof}
Let $j,k \in \set{1,\ldots,n}$. By \eqref{eq:W_zW_w} and \eqref{eq:k_and_l} we have
\[W_{\zeta}l_{z,j} = W_{\zeta}(\Af^{\dagger})^{j-1}k_z = e^{-i\Im(\zeta\bar{z})}W_{z+\zeta}(\Af^{\dagger})^{j-1}\1,\]
hence
\begin{align*}
\sp{W_{-\zeta}TW_{\zeta}l_{z,j}}{l_{z,k}} &= \sp{TW_{z+\zeta}(\Af^{\dagger})^{j-1}\1}{W_{z+\zeta}(\Af^{\dagger})^{k-1}\1} = \sp{T(\Af^{\dagger})^{j-1}k_{z+\zeta}}{(\Af^{\dagger})^{k-1}k_{z+\zeta}}\\
&= \sp{Tl_{z+\zeta,j}}{l_{z+\zeta,k}}.\qedhere
\end{align*}
\end{proof}

Let us define the oscillation of a bounded continuous function $f \from \C \to \C^{n \times n}$ at $z$ as
\[\Osc_z(f) := \sup\set{\norm{f(z)-f(w)} : \abs{z-w} \leq 1},\]
where $\norm{\cdot}$ is of course the usual matrix norm on $\C^{n \times n}$ induced by the Euclidean norm on $\C^n$. As $z \mapsto \Osc_z(f)$ is continuous and bounded, we can extend it to the Stone-\v{C}ech compactification of $\C$. We will use the notation $\Osc_x(f)$ for the extension evaluated at some point $x \in \beta\C$.

We say that a bounded continuous function $f \from \C \to \C^{n \times n}$ has vanishing oscillation and write $f \in \VO(\C \to \C^{n \times n})$ if
\[\lim\limits_{\abs{z} \to \infty} \Osc_z(f) = 0.\]
For $n = 1$ we just write $f \in \VO(\C)$ as usual. Note that $f \in \VO(\C \to \C^{n \times n})$ if and only if all of its matrix entries are in $\VO(\C)$.

The following is an adaptation of \cite[Theorem 36]{Hagger_BSD}. Here, $I_n$ denotes he $n \times n$ identity matrix.

\begin{lem} \label{lem:constant_limit_operators}
Let $\lambda \in \C$ and $x \in \beta\C \setminus \C$.
\begin{itemize}
	\item[(a)] For $T \in \Ac^2_{(k)}$ we have $T_x = \lambda I$ if and only if $[\Bc_{(k)}(T)](x) = \lambda$ and $\Osc_x(\Bc_{(k)}(T)) = 0$.
	\item[(b)] For $T \in \Ac^2_n$ we have $T_x = \lambda I$ if and only if $[\Bc_n(T)](x) = \lambda I_n$ and $\Osc_x(\Bc_n(T)) = 0$.
\end{itemize}
\end{lem}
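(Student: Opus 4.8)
The plan is to deduce both implications from a single pair of identities that transfer the boundary data of $\Bc_{(k)}(T)$ at $x$ to pointwise data of the Berezin transform of the limit operator at the origin, namely
\[
[\Bc_{(k)}(T_x)](0) = [\Bc_{(k)}(T)](x) \qquad \text{and} \qquad \Osc_0(\Bc_{(k)}(T_x)) = \Osc_x(\Bc_{(k)}(T)).
\]
Granting these, part (a) drops out of the injectivity and real-analyticity of $\Bc_{(k)}$. For the forward implication, $T_x = \lambda I$ forces $\Bc_{(k)}(T_x) \equiv \lambda$ (since $\Osc_0(\Bc_{(k)}(T_x))=0$ and $[\Bc_{(k)}(T_x)](0) = \lambda$) so the two identities immediately give $[\Bc_{(k)}(T)](x) = \lambda$ and $\Osc_x(\Bc_{(k)}(T)) = 0$. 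Conversely, if the boundary data are $\lambda$ and $0$, the identities show that $\Bc_{(k)}(T_x)$ equals the constant $\lambda$ on the unit ball $B(0,1)$; since the image of $\Bc_{(k)}$ consists of real-analytic functions, the identity theorem for real-analytic functions on the connected set $\C$ upgrades this to $\Bc_{(k)}(T_x) \equiv \lambda$, and injectivity of $\Bc_{(k)}$ then yields $T_x = \lambda I$.

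To establish the two identities, I would fix a net $(z_\gamma)$ in $\C$ with $z_\gamma \to x$, so that $S_\gamma := W_{-z_\gamma}TW_{z_\gamma} \to T_x$ strongly with $\norm{S_\gamma} = \norm{T}$. By the shift covariance of the Berezin transform (the diagonal analogue of Lemma \ref{lem:Weyl_operators_and_Berezin_transform}) we have $[\Bc_{(k)}(S_\gamma)](z) = [\Bc_{(k)}(T)](z+z_\gamma)$, whence $[\Bc_{(k)}(S_\gamma)](0) = [\Bc_{(k)}(T)](z_\gamma)$ and $\Osc_0(\Bc_{(k)}(S_\gamma)) = \Osc_{z_\gamma}(\Bc_{(k)}(T))$ exactly. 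Letting $z_\gamma \to x$ along the continuous extensions to $\beta\C$ turns the right-hand sides into $[\Bc_{(k)}(T)](x)$ and $\Osc_x(\Bc_{(k)}(T))$, so it remains to show $[\Bc_{(k)}(S_\gamma)](0) \to [\Bc_{(k)}(T_x)](0)$ and $\Osc_0(\Bc_{(k)}(S_\gamma)) \to \Osc_0(\Bc_{(k)}(T_x))$.

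The key technical step, which I expect to be the main obstacle, is the convergence of the oscillations: an oscillation is a supremum over the unit ball, so pointwise convergence of the Berezin transforms is not enough and I need locally uniform convergence. This follows from the strong convergence $S_\gamma \to T_x$ together with the norm-continuity of $z \mapsto l_{z,k}$ (immediate from its explicit formula): the set $\set{l_{z,k} : z \in \overline{B(0,1)}}$ is compact, and from $\abs{[\Bc_{(k)}(S_\gamma - T_x)](z)} \leq \norm{(S_\gamma - T_x)l_{z,k}}$ (using $\norm{l_{z,k}} = 1$) a standard finite-net argument gives uniform convergence on $\overline{B(0,1)}$. Since $\abs{\Osc_0(f_\gamma) - \Osc_0(f)} \leq 2\sup_{\overline{B(0,1)}}\abs{f_\gamma - f}$, the supremum defining $\Osc_0$ then passes to the limit, and the pointwise statement at $0$ is the easy case. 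This completes the two identities and hence part (a).

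Finally, part (b) follows by running the same argument entrywise. The matrix version of the shift covariance is exactly Lemma \ref{lem:Weyl_operators_and_Berezin_transform}, and the orthonormality $\sp{l_{z,j}}{l_{z,k}} = \delta_{jk}$ (from $\Af\Af^\dagger = I$ and $l_{z,k} = (\Af^\dagger)^{k-1}k_z$) gives $\Bc_n(\lambda I) \equiv \lambda I_n$. The identities become $[\Bc_n(T_x)](0) = [\Bc_n(T)](x)$ and $\Osc_0(\Bc_n(T_x)) = \Osc_x(\Bc_n(T))$ with the matrix norm, and the same compactness and uniform-convergence argument applies since each $z \mapsto l_{z,j}$ is norm-continuous. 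Constancy of $\Bc_n(T_x)$ on $B(0,1)$, together with real-analyticity of each matrix entry and injectivity of $\Bc_n$, then yields $T_x = \lambda I$ exactly as before.
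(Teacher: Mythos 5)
Your proof is correct, and it shares the paper's overall skeleton --- Weyl covariance of the Berezin transform (Lemma \ref{lem:Weyl_operators_and_Berezin_transform} and its diagonal analogue for $\Bc_{(k)}$), limits along a net $z_\gamma \to x$, and, for the converse direction, constancy on $\overline{B(0,1)}$ upgraded to global constancy by real-analyticity, the identity theorem and injectivity of $\Bc_{(k)}$, $\Bc_n$ --- but it handles the one genuinely delicate step by a different mechanism. The paper never establishes that $\Bc_n(W_{-z_\gamma}TW_{z_\gamma}) \to \Bc_n(T_x)$ uniformly on the unit ball; to show that $T_x = \lambda I$ forces $\Osc_x(\Bc_n(T)) = 0$ it argues by contradiction, picking witnesses $w_\gamma$ with $\abs{z_\gamma - w_\gamma} \leq 1$, passing to a subnet with $z_\gamma - w_\gamma \to z \in \overline{B(0,1)}$, and playing the pointwise limit \eqref{eq:constant_limit_operators} against the Lipschitz continuity of $\Bc_n(T)$. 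You instead prove exactly that uniform convergence: since $\norm{W_{-z_\gamma}TW_{z_\gamma}} = \norm{T}$ (unitarity of the Weyl operators), the bounded net converges strongly to $T_x$, hence uniformly on the norm-compact set $\set{l_{z,k} : \abs{z} \leq 1}$ by the finite-net argument, so the oscillation at $0$ --- a supremum over the unit ball --- passes to the limit. This packaging yields the two exact transfer identities $[\Bc_{(k)}(T_x)](0) = [\Bc_{(k)}(T)](x)$ and $\Osc_0(\Bc_{(k)}(T_x)) = \Osc_x(\Bc_{(k)}(T))$, from which both implications of the lemma follow formally and symmetrically; the identities are in fact mildly stronger than the lemma itself. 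What the paper's route buys is a proof with fewer moving parts (no uniform-convergence machinery, only the Lipschitz continuity of the Berezin transform, which it has already recorded); what yours buys is a cleaner, direction-free argument that needs only the norm-continuity of $z \mapsto l_{z,k}$ and the uniform bound on the net. Both are complete proofs.
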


\begin{proof}
The proofs of (a) and (b) are identical, so we only show (b).

Assume that $T_x = \lambda I$. Choose a net $(z_{\gamma})$ in $\C$ that converges to $x$. By Lemma \ref{lem:Weyl_operators_and_Berezin_transform}, it follows
\begin{equation} \label{eq:constant_limit_operators}
\lim\limits_{z_{\gamma} \to x} [\Bc_n(T)](z+z_{\gamma}) = \lim\limits_{z_{\gamma} \to x} [\Bc_n(W_{-z_{\gamma}}TW_{z_{\gamma}})](z) = [\Bc_n(T_x)](z) = \lambda I_n
\end{equation}
for all $z \in \C$. In particular, choosing $z = 0$, we obtain $[\Bc_n(T)](x) = \lambda I_n$.

Now assume that $\Osc_x(\Bc_n(T)) \neq 0$. Then there is an $\epsilon > 0$ and a net $(z_{\gamma})$ converging to $x$ such that $\Osc_{z_{\gamma}}(\Bc_n(T)) > 2\epsilon$ for all $\gamma$. For every $\gamma$ we may choose a $w_{\gamma} \in \C$ with $\abs{z_{\gamma}-w_{\gamma}} \leq 1$ such that
\[\norm{[\Bc_n(T)](z_{\gamma}) - [\Bc_n(T)](w_{\gamma})} > \epsilon.\]
Without loss of generality we may assume that the net $(z_{\gamma}-w_{\gamma})$ converges to some $z \in \overline{B(0,1)}$. The Lipschitz continuity of $\Bc_n(T)$ then implies that there exists a $C \geq 0$ such that
\begin{align*}
\norm{[\Bc_n(T)](z_{\gamma}) - [\Bc_n(T)](w_{\gamma})} \!&\leq\! \norm{[\Bc_n(T)](z_{\gamma}) - [\Bc_n(T)](z_{\gamma}-z)} + \norm{[\Bc_n(T)](z_{\gamma}-z) - [\Bc_n(T)](w_{\gamma})}\\
\!&\leq\! \norm{[\Bc_n(T)](z_{\gamma}) - [\Bc_n(T)](z_{\gamma}-z)} + C\abs{z_{\gamma} - z - w_{\gamma}}.
\end{align*}
Using \eqref{eq:constant_limit_operators}, this tends to $0$, which is a contradiction. Thus $\Osc_x(\Bc_n(T)) = 0$.

Now assume that $\Osc_x(\Bc_n(T)) = 0$ and $[\Bc_n(T)](x) = \lambda I_n$. Choose a net $(z_{\gamma})$ in $\C$ that converges to $x$. Then, by Lemma \ref{lem:Weyl_operators_and_Berezin_transform} again,
\[[\Bc_n(T_x)](0) = \lim\limits_{z_{\gamma} \to x} [\Bc_n(W_{-z_{\gamma}}TW_{z_{\gamma}})](0) = \lim\limits_{z_{\gamma} \to x} [\Bc_n(T)](z_{\gamma}) = [\Bc_n(T)](x) = \lambda I_n.\]
Moreover,
\begin{align*}
\norm{[\Bc_n(T_x)](w) - [\Bc_n(T_x)](0)} &= \lim\limits_{z_{\gamma} \to x} \norm{[\Bc_n(W_{-z_{\gamma}}TW_{z_{\gamma}})](w) - [\Bc_n(W_{-z_{\gamma}}TW_{z_{\gamma}})](0)}\\
&= \lim\limits_{z_{\gamma} \to x} \norm{[\Bc_n(T)](w+z_{\gamma}) - [\Bc_n(T)](z_{\gamma})}\\
&\leq \lim\limits_{z_{\gamma} \to x} \Osc_{z_{\gamma}}(\Bc_n(T))\\
&= 0
\end{align*}
for $\abs{w} \leq 1$. As $\Bc_n(T_x)$ is real analytic, the identity theorem implies $[\Bc_n(T_x)](w) = \lambda I_n$ for all $w \in \C$. The injectivity of $\Bc_n$ thus shows $T_x = \lambda I$ as expected.
\end{proof}

\begin{lem} \label{lem:VO_limit_operators}
Let $x \in \beta\C \setminus \C$ and $f \in \VO(\C)$. Then $(T_{f,n})_x = f(x) I$ for all $n \in \N$.
\end{lem}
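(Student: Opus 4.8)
The plan is to verify the two hypotheses of Lemma~\ref{lem:constant_limit_operators}(b) with $\lambda = f(x)$. Since $f \in \VO(\C) \subseteq \BC(\C)$ it extends continuously to $\beta\C$, so $f(x)$ is well defined, and $T_{f,n} \in \Ac^2_n$ by Corollary~\ref{cor:band_dominated}. Hence it suffices to prove the boundary identity $[\Bc_n(T_{f,n})](x) = f(x) I_n$ together with $\Osc_x(\Bc_n(T_{f,n})) = 0$, after which Lemma~\ref{lem:constant_limit_operators}(b) gives $(T_{f,n})_x = f(x) I$ at once.

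First I would compute the entries of $\Bc_n(f) = \Bc_n(T_{f,n})$ explicitly. For $j,k \in \set{1,\ldots,n}$, since $l_{z,k} \in \Fc^2_n$ and $P_n$ is self-adjoint, the $(k,j)$-entry is $\sp{T_{f,n}l_{z,j}}{l_{z,k}} = \sp{fl_{z,j}}{l_{z,k}}$. Inserting the formula for $l_{z,j}$, the Gaussian factors combine into $e^{-\abs{w-z}^2}$, and the substitution $u = w-z$ yields
\[\sp{fl_{z,j}}{l_{z,k}} = \frac{1}{\pi\sqrt{(j-1)!(k-1)!}} \int_{\C} f(z+u)\,\bar{u}^{\,j-1}u^{k-1}e^{-\abs{u}^2} \, \mathrm{d}u.\]
Taking $f \equiv 1$ recovers $\sp{l_{z,j}}{l_{z,k}} = \delta_{jk}$, which also follows from the orthonormality of the family $l_{z,j} = (\Af^{\dagger})^{j-1}k_z$ via \eqref{eq:k_and_l} (distinct indices land in orthogonal spaces $\Fc^2_{(j)}$ and $\Af^{\dagger}$ is isometric). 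Subtracting, the $(k,j)$-entry of $\Bc_n(f)(z) - f(z)I_n$ becomes the same integral with $f(z+u)$ replaced by $f(z+u) - f(z)$.

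The heart of the argument is then a vanishing-oscillation estimate for this convolution-type integral, which I would carry out by splitting the $u$-integral at a radius $R$. On the tail $\abs{u} > R$ the difference is bounded by $2\norm{f}_{\infty}$ times a Gaussian moment that tends to $0$ as $R \to \infty$, uniformly in $z$; on the bulk $\abs{u} \le R$ a chaining argument along a segment from $z$ to $z+u$ gives $\abs{f(z+u)-f(z)} \le \lceil R\rceil \sup_{\abs{v-z}\le R}\Osc_v(f)$, which tends to $0$ as $\abs{z} \to \infty$ for fixed $R$ because $f \in \VO(\C)$. Letting $R \to \infty$ and then $\abs{z} \to \infty$ shows $\Bc_n(f) - fI_n \in C_0(\C \to \C^{n\times n})$; since any net converging to $x \in \beta\C\setminus\C$ escapes every compact set, this gives $[\Bc_n(f)](x) = f(x)I_n$. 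Applying the identical splitting to $[\Bc_n(f)]_{kj}(z) - [\Bc_n(f)]_{kj}(z')$ for $\abs{z-z'}\le 1$, where now $\abs{f(z+u)-f(z'+u)} \le \Osc_{z'+u}(f)$, shows $\Osc_z(\Bc_n(f)) \to 0$ as $\abs{z}\to\infty$, i.e.\ $\Bc_n(f) \in \VO(\C \to \C^{n\times n})$ and hence $\Osc_x(\Bc_n(f)) = 0$.

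The main obstacle is the two-parameter limit in this oscillation estimate: one must control $\abs{f(z+u)-f(z)}$ for $u$ in the bulk of the Gaussian through the oscillation function and then send $R\to\infty$ and $\abs{z}\to\infty$ in the correct order, rather than interchanging a limit with the integral. Routing the proof through the Berezin transform is precisely what makes this tractable, as it replaces the strong net-limit $\slim_{z_\gamma\to x}W_{-z_\gamma}T_{f,n}W_{z_\gamma} = \slim_\gamma T_{f(\cdot+z_\gamma),n}$ by a scalar convolution estimate and avoids the measure-theoretic difficulty that dominated convergence is not directly available along nets.
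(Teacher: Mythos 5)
Your proof is correct, but it follows a genuinely different route from the paper's. The paper argues directly at the operator level: combining \eqref{eq:multiplication_shift} with Proposition \ref{prop:A_commutes_with_Weyl_operators} gives $W_{-z_{\gamma}}T_{f,n}W_{z_{\gamma}} = T_{f(\cdot+z_{\gamma}),n}$ along any net $z_{\gamma} \to x$, and the $\VO$ hypothesis (a one-step oscillation estimate, extended by induction to arbitrary compact sets) shows that the translated symbols $f(\cdot+z_{\gamma})$ converge to the constant $f(x)$ uniformly on compact sets, whence $T_{f(\cdot+z_{\gamma}),n} \to f(x)I$ strongly --- which is the limit operator by definition, with no Berezin transform needed at all. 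You instead verify the two hypotheses of Lemma \ref{lem:constant_limit_operators}(b): your entrywise formula for $\Bc_n(T_{f,n})$ as a Gaussian convolution against $\bar{u}^{j-1}u^{k-1}e^{-\abs{u}^2}$ is right, your $R$-splitting (tail estimate uniform in $z$, bulk controlled by chained oscillations, with the limits $\abs{z}\to\infty$ and $R\to\infty$ taken in the correct order) correctly yields $\Bc_n(T_{f,n}) - fI_n \in C_0(\C \to \C^{n \times n})$ and $\Bc_n(T_{f,n}) \in \VO(\C \to \C^{n \times n})$, and there is no circularity, since Lemma \ref{lem:constant_limit_operators} is proved independently of the present lemma. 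The trade-off is this: the paper's proof is shorter and more elementary, and in particular does not lean on Lemma \ref{lem:constant_limit_operators}, whose own proof requires real analyticity, the identity theorem and the injectivity of $\Bc_n$; your route is computationally heavier but produces worthwhile byproducts that the paper's direct argument does not exhibit, namely the quantitative facts that the generalized Berezin transform of a Toeplitz operator with $\VO$ symbol is again of vanishing oscillation and differs from $f I_n$ only by a function in $C_0(\C \to \C^{n \times n})$ --- the matrix-valued analogue of the classical statement that the Berezin transform fixes $\VO$ modulo $C_0$.
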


\begin{proof}
Choose a net $(z_{\gamma})$ in $\C$ that converges to $x$. By Combining \eqref{eq:multiplication_shift} and Proposition \ref{prop:A_commutes_with_Weyl_operators}, we get
\[W_{-z_{\gamma}}T_{f,n}W_{z_{\gamma}} = T_{f(\cdot + z_{\gamma}),n}.\]
For $\abs{w} \leq 1$ we have
\begin{align*}
\abs{f(w + z_{\gamma}) - f(x)} &\leq \abs{f(w + z_{\gamma}) - f(z_{\gamma})} + \abs{f(z_{\gamma}) - f(x)}\\
&\leq \Osc_{z_{\gamma}}(f) + \abs{f(z_{\gamma}) - f(x)}.
\end{align*}
Since $f \in \VO(\C)$, this converges to $0$ uniformly in $w$. A straightforward induction argument now shows that the net $(f(\cdot + z_{\gamma}))$ converges to the constant function $f(x)$ uniformly on compact sets. This implies $T_{f(\cdot + z_{\gamma}),n} \to f(x) I$ in the strong operator topology.
\end{proof}

We now combine the previous two lemmas to our next theorem, which can be viewed as a generalization of Theorem \ref{thm:main} (a). The formula for the essential spectrum is well-known in case $k = 1$ (see \cite[Theorem 19]{BeCo}).

\begin{thm} \label{thm:VO+K_1}
Let $T \in \Ac^2_{(k)}$. The following are equivalent:
\begin{itemize}
	\item[(i)] Every limit operator of $T$ is a multiple of the identity.
	\item[(ii)] $T = T_{f,(k)} + K$, where $f \in \VO(\C)$ and $K$ is compact.
\end{itemize}
In that case $f$ can be chosen as $\Bc_{(k)}(T)$ and we have
\[\spec_{\ess}(T) = [\Bc_{(k)}(T)](\beta\C \setminus \C).\]
\end{thm}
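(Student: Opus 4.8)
The plan is to prove the equivalence of (i) and (ii) by combining Lemma \ref{lem:constant_limit_operators}(a), Lemma \ref{lem:VO_limit_operators}, and the compactness criterion of Theorem \ref{thm:main}(a), and then to deduce the essential spectrum formula from the Fredholm criterion of Theorem \ref{thm:main}(b).

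First I would prove (ii) $\Rightarrow$ (i). Suppose $T = T_{f,(k)} + K$ with $f \in \VO(\C)$ and $K$ compact. For any $x \in \beta\C \setminus \C$ the limit operator map is additive, so $T_x = (T_{f,(k)})_x + K_x$. By Theorem \ref{thm:main}(a), $K_x = 0$ since $K$ is compact; by (the $\Fc^2_{(k)}$-analogue of) Lemma \ref{lem:VO_limit_operators}, $(T_{f,(k)})_x = f(x) I$. Hence every limit operator $T_x = f(x) I$ is a multiple of the identity, giving (i).

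Next I would prove (i) $\Rightarrow$ (ii) and simultaneously identify $f = \Bc_{(k)}(T)$. Assume every $T_x$ is a scalar multiple of the identity, say $T_x = \lambda_x I$. By Lemma \ref{lem:constant_limit_operators}(a), each such $x$ satisfies $[\Bc_{(k)}(T)](x) = \lambda_x$ and $\Osc_x(\Bc_{(k)}(T)) = 0$. Since $\Osc_x(\Bc_{(k)}(T)) = 0$ holds for every $x \in \beta\C \setminus \C$, the function $\Bc_{(k)}(T)$ lies in $\VO(\C)$; set $f := \Bc_{(k)}(T) \in \VO(\C)$. Now form the difference $K := T - T_{f,(k)}$. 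Since both $T$ and $T_{f,(k)}$ are band-dominated (the latter by Corollary \ref{cor:band_dominated}), $K \in \Ac^2_{(k)}$. For each $x \in \beta\C \setminus \C$ we have $K_x = T_x - (T_{f,(k)})_x = \lambda_x I - f(x) I = 0$, using $\lambda_x = [\Bc_{(k)}(T)](x) = f(x)$ and Lemma \ref{lem:VO_limit_operators}. By Theorem \ref{thm:main}(a) it follows that $K$ is compact, so $T = T_{f,(k)} + K$ is the desired decomposition.

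Finally I would establish the essential spectrum formula. The operator $T - \mu I$ is Fredholm if and only if, by Theorem \ref{thm:main}(b), every limit operator $(T - \mu I)_x = T_x - \mu I = (f(x) - \mu) I$ is invertible, i.e.\ if and only if $\mu \neq f(x)$ for all $x \in \beta\C \setminus \C$. Therefore $\mu \in \spec_{\ess}(T)$ exactly when $\mu = f(x)$ for some $x \in \beta\C \setminus \C$, which reads $\spec_{\ess}(T) = f(\beta\C \setminus \C) = [\Bc_{(k)}(T)](\beta\C \setminus \C)$. The one point requiring a little care here is that the extension of $\Bc_{(k)}(T)$ to $\beta\C \setminus \C$ (and hence its image there) is well-defined and that $f(x) = \lambda_x$ really is the scalar defining $T_x$; this is exactly what Lemma \ref{lem:constant_limit_operators}(a) supplies, so the main obstacle is bookkeeping rather than a genuine difficulty. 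I would expect the only subtle step to be verifying that $\Osc_x(\Bc_{(k)}(T)) = 0$ for \emph{all} boundary points $x$ indeed forces $\Bc_{(k)}(T) \in \VO(\C)$, which follows because $z \mapsto \Osc_z(\Bc_{(k)}(T))$ is a bounded continuous function whose continuous extension to $\beta\C$ vanishes on the corona $\beta\C \setminus \C$, and this is equivalent to $\lim_{\abs{z}\to\infty}\Osc_z(\Bc_{(k)}(T)) = 0$.
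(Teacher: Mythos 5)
Your proof is correct and follows essentially the same route as the paper: Lemma \ref{lem:constant_limit_operators} gives $f := \Bc_{(k)}(T) \in \VO(\C)$, Lemma \ref{lem:VO_limit_operators} identifies the limit operators of the Toeplitz part, Theorem \ref{thm:main}(a) yields compactness of $T - T_{f,(k)}$, and Theorem \ref{thm:main}(b) gives the essential spectrum formula. The only detail you gloss over---the ``$\Fc^2_{(k)}$-analogue'' of Lemma \ref{lem:VO_limit_operators}---is supplied in the paper in one line via the identity $T_{f,(k)} = P_{(k)}T_{f,n}|_{\Fc^2_{(k)}}$ for $n \geq k$ together with the fact that $P_{(k)}$ commutes with the Weyl operators (Proposition \ref{prop:A_commutes_with_Weyl_operators}).
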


\begin{proof}
Assume that every limit operator of $T$ is a multiple of the identity. Then Lemma \ref{lem:constant_limit_operators} implies $f := \Bc_{(k)}(T) \in \VO(\C)$. It remains to show that $T - T_{f,(k)}$ is compact. Let $x \in \beta\C \setminus \C$. We know from Lemma \ref{lem:VO_limit_operators} that $(T_{f,n})_x = f(x) I$ for every $n \in \N$. Since $P_{(k)}$ commutes with the Weyl operators (see Proposition \ref{prop:A_commutes_with_Weyl_operators}) and $T_{f,(k)} = P_{(k)}T_{f,n}|_{\Fc^2_{(k)}}$ for any $n \geq k$, this means that we also have $(T_{f,(k)})_x = f(x) I$. On the other hand, we also have $T_x = f(x) I$ by Lemma \ref{lem:constant_limit_operators}. It follows $(T - T_{f,(k)})_x = 0$. As $x \in \beta\C \setminus \C$ was arbitrary, Theorem \ref{thm:main} (a) implies that $T - T_{f,(k)}$ is indeed compact. Moreover, we get the formula $\spec_{\ess}(T) = [\Bc_{(k)}(T)](\beta\C \setminus \C)$ via Theorem \ref{thm:main} (b).

Now assume that $T = T_{f,(k)} + K$ for some $f \in \VO(\C)$ and $K \in \Kc(\Fc^2_{(k)})$. Using Theorem \ref{thm:main} (a) and Lemma \ref{lem:VO_limit_operators} again, we obtain
\[T_x = (T_{f,(k)})_x + K_x = f(x) I\]
for every $x \in \beta\C \setminus \C$.
\end{proof}

Next, we want to prove a version of this result for operators $T \in \Ac^2_n$. However, a priori it is not clear what function $f$ one should take for the decomposition $T = T_{f,n} + K$ because $\Bc_n(T)$ is matrix valued. It turns out that any of the functions $\Bc_{(k)}(T) := \Bc_{(k)}(P_{(k)}T|_{\Fc^2_{(k)}})$, $k = 1,\ldots,n$ works because they only differ by a $C_0$-function in that case.

\begin{lem} \label{lem:Berezin_difference_C_0}
Let $T \in \Ac^2_n$ and assume that every limit operator of $T$ is a multiple of the identity. Then
\[g := \widetilde{T} - \Bc_{(k)}(T) \in C_0(\C)\]
for all $k = 1,\ldots,n$. In particular, $T_{g,(k)}$ is compact.
\end{lem}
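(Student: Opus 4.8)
The plan is to realize $\widetilde{T}$ and $\Bc_{(k)}(T)$ as two diagonal entries of the single matrix-valued transform $\Bc_n(T)$, show that these entries agree on the corona $\beta\C \setminus \C$, and then invoke the standard description of $C_0$ via the Stone-\v{C}ech compactification.

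First I would unwind the definitions. Since $l_{z,1} \in \Fc^2_{(1)}$ and $l_{z,k} \in \Fc^2_{(k)}$, we have $\widetilde{T}(z) = \sp{P_1Tl_{z,1}}{l_{z,1}} = \sp{Tl_{z,1}}{l_{z,1}}$ and $\Bc_{(k)}(T)(z) = \sp{P_{(k)}Tl_{z,k}}{l_{z,k}} = \sp{Tl_{z,k}}{l_{z,k}}$; that is, $\widetilde{T}$ and $\Bc_{(k)}(T)$ are precisely the $(1,1)$ and $(k,k)$ diagonal entries of $\Bc_n(T)$. Because $\Af^{\dagger}$ is an isometry, each $l_{z,j}$ is a unit vector, so every entry of $\Bc_n(T)$ is bounded by $\norm{T}$ and continuous, hence extends continuously to $\beta\C$. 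Consequently $g = \widetilde{T} - \Bc_{(k)}(T)$ is a bounded continuous function on $\C$ admitting a continuous extension to $\beta\C$.

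Next I would use the hypothesis on limit operators. Fix $x \in \beta\C \setminus \C$. By assumption $T_x = \lambda I$ for some $\lambda = \lambda(x) \in \C$, so Lemma \ref{lem:constant_limit_operators} (b) gives $[\Bc_n(T)](x) = \lambda I_n$. In particular all diagonal entries of $\Bc_n(T)$ coincide at $x$ and equal $\lambda$; reading off the $(1,1)$ and $(k,k)$ positions yields $\widetilde{T}(x) = \Bc_{(k)}(T)(x) = \lambda$, whence $g(x) = 0$. As $x$ was arbitrary, the continuous extension of $g$ vanishes on all of $\beta\C \setminus \C$, and by the standard characterization of $C_0(\C)$ as those bounded continuous functions whose Stone-\v{C}ech extension vanishes on the corona, we conclude $g \in C_0(\C)$.

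Finally, for the compactness of $T_{g,(k)}$: since $g \in C_0(\C) \subseteq \VO(\C)$ and the extension of $g$ vanishes on $\beta\C \setminus \C$, Lemma \ref{lem:VO_limit_operators} together with the fact that $P_{(k)}$ commutes with the Weyl operators (exactly as in the proof of Theorem \ref{thm:VO+K_1}, using $T_{g,(k)} = P_{(k)}T_{g,n}|_{\Fc^2_{(k)}}$) gives $(T_{g,(k)})_x = g(x)I = 0$ for every $x \in \beta\C \setminus \C$. Since $T_{g,(k)}$ is band-dominated by Corollary \ref{cor:band_dominated}, Theorem \ref{thm:main} (a) shows it is compact. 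The only genuine content lies in the middle step, namely recognizing $\widetilde{T}$ and $\Bc_{(k)}(T)$ as diagonal entries of $\Bc_n(T)$ and passing from ``scalar limit operators'' to ``matching boundary values''; once Lemma \ref{lem:constant_limit_operators} is available the remaining $C_0$/Stone-\v{C}ech argument is routine.
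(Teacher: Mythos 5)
Your proposal is correct and follows essentially the same route as the paper: both hinge on Lemma \ref{lem:constant_limit_operators} to convert the hypothesis $T_x = \lambda I$ into the statement that the relevant Berezin transforms all equal $\lambda$ at $x$, and then conclude via the Stone-\v{C}ech characterization of $C_0(\C)$ together with Lemma \ref{lem:VO_limit_operators} and Theorem \ref{thm:main} (a) for the compactness of $T_{g,(k)}$. The only cosmetic difference is that you invoke the matrix version, part (b), directly on $T$ and read off $\widetilde{T}$ and $\Bc_{(k)}(T)$ as the $(1,1)$ and $(k,k)$ diagonal entries of $\Bc_n(T)$, whereas the paper applies the scalar version, part (a), to the compressions $P_{(k)}T|_{\Fc^2_{(k)}}$, which inherit the scalar limit operators because $P_{(k)}$ commutes with the Weyl operators.
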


\begin{proof}
Let $k \in \set{1,\ldots,n}$, $x \in \beta\C \setminus \C$ and $T_x = \lambda I$ for some $\lambda \in \C$. This implies that $(P_{(k)}T|_{\Fc^2_{(k)}})_x = \lambda I$ as well because $P_{(k)}$ commutes with $W_z$ by Proposition \ref{prop:A_commutes_with_Weyl_operators}. Thus $[\Bc_{(k)}(T)](x) = \lambda$ for all $k = 1,\ldots,n$ by Lemma \ref{lem:constant_limit_operators}. As $x \in \beta\C \setminus \C$ was arbitrary, $\widetilde{T} = \Bc_{(1)}(T)$ and $\Bc_{(k)}(T)$ agree on $\beta\C \setminus \C$, which proves the first part of the lemma. That $C_0$-functions produce compact Toeplitz operators is easy to show directly, but also follows from Theorem \ref{thm:main} (a) and Lemma \ref{lem:VO_limit_operators}.
\end{proof}

We can now formulate a version of Theorem \ref{thm:VO+K_1} for $T \in \Ac^2_n$.

\begin{thm} \label{thm:VO+K_2}
Let $T \in \Ac^2_n$. The following are equivalent:
\begin{itemize}
	\item[(i)] Every limit operator of $T$ is a multiple of the identity.
	\item[(ii)] $T = T_{f,n} + K$, where $f \in \VO(\C)$ and $K$ is compact.
\end{itemize}
In that case $f$ can be chosen as $\widetilde{T}$ and we have
\[\spec_{\ess}(T) = \widetilde{T}(\beta\C \setminus \C).\]
\end{thm}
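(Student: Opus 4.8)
The plan is to adapt the proof of Theorem \ref{thm:VO+K_1}, the only genuinely new difficulty being that the transform $\Bc_n(T)$ is now matrix valued, so there is no obvious scalar symbol to read off. The way around this is to single out the scalar function $f := \widetilde{T}$ and to transport the already-established scalar statement of Theorem \ref{thm:VO+K_1} into this setting through the compression onto the first true polyanalytic Fock space. The engine of the argument remains the combination of Lemma \ref{lem:VO_limit_operators} (which computes $(T_{f,n})_x$) with the compactness and Fredholm criteria of Theorem \ref{thm:main}.

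For the implication (i) $\Rightarrow$ (ii), I would set $f := \widetilde{T}$ and first argue that $f \in \VO(\C)$. Consider the compression $S := P_{(1)}T|_{\Fc^2_{(1)}} \in \Ac^2_{(1)}$ (band-dominatedness is preserved under this compression), whose Berezin transform is precisely $\Bc_{(1)}(S) = \widetilde{T}$. Since $P_{(1)}$ commutes with every Weyl operator (Proposition \ref{prop:A_commutes_with_Weyl_operators}), each limit operator of $S$ is the compression of the corresponding limit operator of $T$, so $T_x = \lambda I$ forces $S_x = \lambda I$; in particular every limit operator of $S$ is a multiple of the identity. Theorem \ref{thm:VO+K_1} applied to $S$ then yields $\widetilde{T} \in \VO(\C)$, while Lemma \ref{lem:constant_limit_operators}(a) supplies the crucial identity $\widetilde{T}(x) = \lambda$ whenever $T_x = \lambda I$. (The same value is the $(1,1)$-entry of the matrix $\Bc_n(T)(x) = \lambda I_n$ provided by Lemma \ref{lem:constant_limit_operators}(b), and Lemma \ref{lem:Berezin_difference_C_0} confirms that any other $\Bc_{(k)}(T)$ would serve equally well.) It then remains to check that $K := T - T_{f,n}$ is compact: for every $x \in \beta\C \setminus \C$, Lemma \ref{lem:VO_limit_operators} gives $(T_{f,n})_x = f(x)I = \widetilde{T}(x)I = \lambda I = T_x$, so $(T - T_{f,n})_x = 0$ for all $x$, and Theorem \ref{thm:main}(a) forces $K$ to be compact.

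The reverse implication is immediate. If $T = T_{f,n} + K$ with $f \in \VO(\C)$ and $K$ compact, then for every $x \in \beta\C \setminus \C$ we have $K_x = 0$ by Theorem \ref{thm:main}(a) and $(T_{f,n})_x = f(x)I$ by Lemma \ref{lem:VO_limit_operators}, whence $T_x = f(x)I$ is a multiple of the identity.

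Finally, for the essential spectrum I would use the symbol $f = \widetilde{T}$ just produced and compute the limit operators of $T - \lambda I$ directly: $(T - \lambda I)_x = T_x - \lambda I = (\widetilde{T}(x) - \lambda)I$. By the Fredholm criterion of Theorem \ref{thm:main}(b), $T - \lambda I$ is Fredholm exactly when all these are invertible, i.e.\ when $\widetilde{T}(x) \neq \lambda$ for every $x \in \beta\C \setminus \C$, equivalently when $\lambda \notin \widetilde{T}(\beta\C \setminus \C)$. Taking complements gives $\spec_{\ess}(T) = \widetilde{T}(\beta\C \setminus \C)$. The step I expect to require the most care is the matrix-to-scalar passage at the start of (i) $\Rightarrow$ (ii): one must verify that compressing to $\Fc^2_{(1)}$ retains enough information to recover the scalar $\lambda$, which rests precisely on the commutation $P_{(1)}W_z = W_zP_{(1)}$ and on Lemma \ref{lem:constant_limit_operators}(b), guaranteeing that the matrix transform degenerates to a scalar multiple of $I_n$ at each boundary point.
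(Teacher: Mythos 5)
Your argument is correct, but it is organized quite differently from the paper's. The paper proves (i) $\Rightarrow$ (ii) by writing $T = \sum_{j,k=1}^n P_{(k)}TP_{(j)}$ and treating the blocks separately: the diagonal blocks are handled by applying Theorem \ref{thm:VO+K_1} on each $\Fc^2_{(k)}$ and then invoking Lemma \ref{lem:Berezin_difference_C_0} to replace the symbols $\Bc_{(k)}(T)$ by the common symbol $\widetilde{T}$ modulo compact operators, while the off-diagonal blocks $P_{(k)}TP_{(j)}$ and $P_{(k)}T_{\widetilde{T},n}P_{(j)}$, $j \neq k$, are compact by Theorem \ref{thm:main} because all their limit operators vanish. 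You avoid this blockwise bookkeeping entirely: you first identify the scalar $\lambda_x$ in $T_x = \lambda_x I$ with $\widetilde{T}(x)$ (via the compression $S = P_{(1)}T|_{\Fc^2_{(1)}}$, whose limit operators are the compressions of those of $T$ because $P_{(1)}$ commutes with the Weyl operators, together with Lemma \ref{lem:constant_limit_operators}(a)), and then dispose of $T - T_{\widetilde{T},n}$ in one stroke by computing $(T - T_{\widetilde{T},n})_x = \lambda_x I - \widetilde{T}(x)I = 0$ and applying Theorem \ref{thm:main}(a) once on $\Fc^2_n$ --- i.e.\ you run the proof of Theorem \ref{thm:VO+K_1} verbatim on $\Fc^2_n$ instead of $\Fc^2_{(k)}$. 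Both arguments rest on the same three pillars (Theorem \ref{thm:main}, Lemma \ref{lem:constant_limit_operators}, Lemma \ref{lem:VO_limit_operators}); yours is shorter and shows that neither the block decomposition nor Lemma \ref{lem:Berezin_difference_C_0} is strictly needed here. In fact even your compression detour could be skipped: Lemma \ref{lem:constant_limit_operators}(b) already gives $[\Bc_n(T)](x) = \lambda_x I_n$ and $\Osc_x(\Bc_n(T)) = 0$ for all $x \in \beta\C\setminus\C$, so $\widetilde{T}$, being the $(1,1)$-entry of $\Bc_n(T)$, lies in $\VO(\C)$ and satisfies $\widetilde{T}(x) = \lambda_x$. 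What the paper's route buys in exchange is the additional information made explicit by Lemma \ref{lem:Berezin_difference_C_0}, namely that any of the transforms $\Bc_{(k)}(T)$, not just $\widetilde{T}$, can serve as the symbol $f$ in (ii). Your treatment of (ii) $\Rightarrow$ (i) and of the essential spectrum formula coincides with the paper's.
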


\begin{proof}
Assume (i) and write $T = \sum\limits_{j,k = 1}^n P_{(k)}TP_{(j)}$. Combining Theorem \ref{thm:VO+K_1} with Lemma \ref{lem:Berezin_difference_C_0} we get that $P_{(k)}TP_{(k)} - P_{(k)}T_{\widetilde{T},n}P_{(k)}$ is compact and $\widetilde{T} \in \VO(\C)$. $P_{(k)}TP_{(j)}$ and $P_{(k)}T_{\widetilde{T},n}P_{(j)}$ are also compact for $j \neq k$. In both cases this follows from Theorem \ref{thm:main} as all limit operators are zero, respectively. We infer that $T - T_{\widetilde{T},n}$ is compact, which implies (ii).

The other direction and the formula for the essential spectrum follow from Theorem \ref{thm:main} and Lemma \ref{lem:VO_limit_operators} just like in the proof of Theorem \ref{thm:VO+K_1}.
\end{proof}

\section{Compact Toeplitz and Hankel operators} \label{sec:Toeplitz_and_Hankel}

In \cite{RoVa} it was observed that a Toeplitz operator on a true polyanalytic Fock space $\Fc^2_{(k)}$ is unitarily equivalent to a Toeplitz operator on the analytic Fock space $\Fc^2$ with a much more irregular, possibly distributional symbol. After this observation, Rozenblum and Vasilevski offer a choice of considering ``operators with nice symbols in `bad' spaces or operators in nice spaces with `bad' symbols''. Conversely, one would therefore expect that if we take a very good symbol, e.g.~a symbol that induces a compact Toeplitz operator on $\Fc^2$, then this symbol would also induce an in this sense very good (or even better) Toeplitz operator on the polyanalytic spaces. The next result thus may not be very surprising and in fact follows directly from our considerations above. A related result was proven by different means in \cite[Proposition 4.6]{LuSk}.

\begin{thm} \label{thm:compact_Toeplitz}
Let $k,n \in \N$ and $f \in L^{\infty}(\C,\mu)$. $T_{f,1}$ is compact if and only if $T_{f,n}$ is compact. In particular, if $T_{f,(1)}$ is compact, then every $T_{f,(k)}$ is compact as well.
\end{thm}

\begin{proof}
As $T_{f,1}$ is a compression of $T_{f,n}$, it is clear that if $T_{f,n}$ is compact, then $T_{f,1}$ is necessarily compact as well.

So assume that $T_{f,1}$ is compact. By Theorem \ref{thm:compact_Berezin}, we need to show that $\Bc_n(T_{f,n}) \in C_0(\C \to \C^{n \times n})$, that is, $\lim\limits_{\abs{z} \to \infty} \sp{T_{f,n}l_{z,j}}{l_{z,k}} = 0$ for all $j,k = 1,\ldots,n$. We have
\begin{align*}
\sp{T_{f,n}l_{z,j}}{l_{z,k}} &= \frac{1}{\sqrt{(j-1)!}\sqrt{(k-1)!}}\int_{\C} f(w)(\bar{w}-\bar{z})^{j-1}(w-z)^{k-1}e^{w\bar{z}+\bar{w}z-|z|^2} \, \mathrm{d}\mu(w)\\
&= \langle T_{f,1}\hat{l}_{z,k} , \hat{l}_{z,j} \rangle,
\end{align*}
where $\hat{l}_{z,k}(w) = \frac{1}{\sqrt{(k-1)!}}(w-z)^{k-1}e^{w\bar{z}-\frac{1}{2}|z|^2}$. Note that $\hat{l}_{z,k} \in \Fc^2_1$ and
\[\hat{l}_{z,k} = W_zm_k,\]
where $m_k$ is the monomial given by $m_k(w) = \frac{1}{\sqrt{(k-1)!}}w^{k-1}$. We thus have
\[\sp{T_{f,n}l_{z,j}}{l_{z,k}} = \sp{T_{f,1}W_zm_k}{W_zm_j} = \sp{W_{-z}T_{f,1}W_zm_k}{m_j},\]
which converges to $0$ as $\abs{z} \to \infty$ by Theorem \ref{thm:main} (a).
\end{proof}

Combining this result with Theorem \ref{thm:VO+K_2}, we obtain the following generalization. It particularly applies to Toeplitz operators of the form $\lambda I + K$ with $K \in \Kc(\Fc^2_n)$, which is the case considered in \cite[Proposition 4.6]{LuSk}.

\begin{cor}
Let $n \in \N$ and $f \in L^{\infty}(\C,\mu)$. Every limit operator of $T_{f,1}$ is a multiple of the identity if and only if every limit operator of $T_{f,n}$ is a multiple of the identity.
\end{cor}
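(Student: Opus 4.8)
The plan is to collapse both conditions to a single common statement, by first identifying the distinguished $\VO$-symbol produced by Theorems \ref{thm:VO+K_1} and \ref{thm:VO+K_2}, and then using Theorem \ref{thm:compact_Toeplitz} to transfer the residual compactness between the analytic and polyanalytic settings. Both operators are band-dominated by Corollary \ref{cor:band_dominated}, so $T_{f,1} \in \Ac^2_{(1)}$ and $T_{f,n} \in \Ac^2_n$, and the two decomposition theorems apply. The first step is to check that the canonical symbol is the \emph{same} function $\tilde{f} = \Bc_1(f)$ in both theorems. Indeed $l_{z,1} = k_z$, so $\Bc_{(1)}(T_{f,1}) = \tilde{f}$; and since $P_{(1)}P_n = P_{(1)}$, the compression $P_1 T_{f,n}|_{\Fc^2_1}$ equals $T_{f,1}$, whence $\widetilde{T_{f,n}} = \tilde{f}$ as well. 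Moreover $\tilde{f}$ is bounded (being the Berezin transform of a bounded operator) and continuous, so $f - \tilde{f} \in L^{\infty}(\C,\mu)$.

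Next I would unwind each side via the decomposition theorems. Applying Theorem \ref{thm:VO+K_1} with $k = 1$ to $T_{f,1}$, the statement that every limit operator of $T_{f,1}$ is a multiple of the identity is equivalent to $\tilde{f} \in \VO(\C)$ together with $T_{f,1} - T_{\tilde{f},1} = T_{f - \tilde{f},1}$ being compact; the forward implication uses that the symbol may be chosen as $\Bc_{(1)}(T_{f,1}) = \tilde{f}$, the reverse is immediate from the implication (ii)$\Rightarrow$(i). Applying Theorem \ref{thm:VO+K_2} to $T_{f,n}$ in exactly the same way, the statement that every limit operator of $T_{f,n}$ is a multiple of the identity is equivalent to $\tilde{f} \in \VO(\C)$ together with $T_{f,n} - T_{\tilde{f},n} = T_{f - \tilde{f},n}$ being compact, where I again use $\widetilde{T_{f,n}} = \tilde{f}$.

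The two resulting statements now differ only in whether compactness is demanded of $T_{f - \tilde{f},1}$ or of $T_{f - \tilde{f},n}$. This is precisely Theorem \ref{thm:compact_Toeplitz} applied to the symbol $f - \tilde{f} \in L^{\infty}(\C,\mu)$: $T_{f - \tilde{f},1}$ is compact if and only if $T_{f - \tilde{f},n}$ is compact. Chaining the three equivalences shows that both conditions are equivalent to the single statement that $\tilde{f} \in \VO(\C)$ and $T_{f - \tilde{f},n}$ is compact, and hence to each other.

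I expect no genuinely hard analytic step: everything rests on the machinery already established. The only point requiring care is the bookkeeping in the first step, namely verifying that the two theorems really produce the identical symbol $\tilde{f}$, so that their remainder Toeplitz operators have the same symbol $f - \tilde{f}$ and Theorem \ref{thm:compact_Toeplitz} is directly applicable. The substantive input is Theorem \ref{thm:compact_Toeplitz}, whose order-independence of compactness is exactly what makes the transfer between $n = 1$ and general $n$ possible.
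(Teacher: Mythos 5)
Your proof is correct and follows essentially the same route as the paper: unwind both hypotheses via Theorems \ref{thm:VO+K_1}/\ref{thm:VO+K_2} with the common symbol $\tilde{f}$, then transfer compactness of $T_{f-\tilde{f},\cdot}$ between orders using Theorem \ref{thm:compact_Toeplitz}. The only (cosmetic) difference is that you run both directions through this chain of equivalences, whereas the paper dismisses the ``if'' direction as obvious (since $T_{f,1}$ is a compression of $T_{f,n}$ and $P_{(1)}$ commutes with the Weyl operators) and applies the machinery only to the converse.
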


\begin{proof}
The ``if'' direction is again obvious. So assume that every limit operator of $T_{f,1}$ is a multiple of the identity. Theorem \ref{thm:VO+K_2} implies that $\tilde{f} \in \VO$ and $T_{f-\tilde{f},1}$ is compact. This means that $T_{f-\tilde{f},n}$ is also compact by Theorem \ref{thm:compact_Toeplitz}. Using Theorem \ref{thm:VO+K_2} again, we see that every limit operator of $T_{f,n}$ is a multiple of the identity as well.
\end{proof}

Next, we turn our attention to Hankel operators. The theory developed in Section \ref{sec:limit_operators} cannot be applied directly to them because they do not map into a polyanalytic Fock space. This can of course be circumvented by considering $H_{f,n}^*H_{f,n} \in \Lc(\Fc^2_n)$ instead, which we will do later on. However, we will take a slightly more general approach first which provides a compactness characterization for all operators acting on $L^2(\C,\mu)$. For this we use the decomposition
\[L^2(\C,\mu) = \bigoplus\limits_{k = 1}^\infty \Fc^2_{(k)} \cong \ell^2(\N,\Fc^2)\]
again. The (non-commutative) algebra $\ell^{\infty}(\N,\Lc(\Fc^2))$ acts on $\ell^2(\N,\Fc^2)$ via multiplication, that is, if $g \in \ell^{\infty}(\N,\Lc(\Fc^2))$, then
\[m_g \from \ell^2(\N,\Fc^2) \to \ell^2(\N,\Fc^2), \quad (m_gf)(k) = g(k)f(k)\]
for $f \in \ell^2(\N,\Fc^2)$, $k \in \N$. Moreover, we have the usual shift operators $V$ acting on $\ell^2(\N,\Fc^2)$ defined by
\[(Vf)(1) := 0, \quad (Vf)(k+1) := f(k)\]
for $f \in \ell^2(\N,\Fc^2)$, $k \in \N$. One can now define band-dominated operators on $\ell^2(\N,\Fc^2)$ with respect to their matrix structure as follows.

\begin{defn}[Definition 2.1.5 in \cite{RaRoSi}]
Let $\omega \in \N$. Operators of the form
\[T = \sum\limits_{j = 0}^{\omega} m_{g_j}V^j + \sum\limits_{j = 1}^{\omega} m_{g_{-j}}(V^*)^j\]
with $g_j \in \ell^{\infty}(\N,\Lc(\Fc^2))$ are called band operators on $\ell^2(\N,\Fc^2)$. Operators obtained as the norm limit of a sequence of band operators are then called band-dominated on $\ell^2(\N,\Fc^2)$.
\end{defn}

One could of course also define band-dominated operators on $\ell^2(\N,\Fc^2)$ just like we did for $L^2(\C,\mu)$ in Definition \ref{defn:BDO} and it is not difficult to show that this would be an equivalent definition (see e.g.~\cite[Theorem 2.1.6]{RaRoSi}). However, using this version makes it a little more obvious why these operators are called band-dominated and it will also be more straightforward to use in the proof of the next theorem. We emphasize that it is important not to confuse the two notions of band-dominated operators, though, which is why we added the suffix ``on $\ell^2(\N,\Fc^2)$''. Indeed, they act on different spaces ($L^2(\C,\mu)$ vs.~$\ell^2(\N,\Fc^2)$) and are not equivalent via the isomorphism
\[U \from L^2(\C,\mu) \to \ell^2(\N,\Fc^2).\]

\begin{ex}~ \label{ex:counterexample}
\begin{itemize}
	\item[(a)] 	Consider the Hankel operator $H_{f,1}$ for $f(z) = e^{i\abs{z}^2}$. Then $H_{f,1}$ is band-dominated by Corollary \ref{cor:band_dominated}. We claim that $UH_{f,1}U^{-1}$ is not band-dominated on $\ell^2(\N,\Fc^2)$. So assume by contradiction that $UH_{f,1}U^{-1}$ is band-dominated on $\ell^2(\N,\Fc^2)$. Then necessarily $\norm{(I-P_n)H_{f,1}P_1} \to 0$ as $n \to \infty$ by \cite[Propositions 1.20 and 1.48]{Lindner}. The Toeplitz operator $T_{f,1}$ is compact, which can be deduced directly from its eigenvalues (see \cite[Example (A)]{BaCoHa}). By Theorem \ref{thm:compact_Toeplitz}, this means that for every $n \in \N$ the operator $P_nM_fP_1 = T_{f,n}P_1$ is compact as well. Combining this with $\norm{(I-P_n)H_{f,1}P_1} \to 0$ shows that $H_{f,1}P_1$ is also compact. But this would mean that
	\[P_1 = P_1M_{\bar f}M_fP_1 = P_1M_{\bar f}H_{f,1}P_1 + P_1M_{\bar f}T_{f,1}P_1,\]
	is compact, which is obviously a contradiction. This shows that $UH_{f,1}U^{-1}$ is not band-dominated on $\ell^2(\N,\Fc^2)$.
	\item[(b)] To give an operator $T \notin \BDO^2$ such that $UTU^{-1}$ is band-dominated on $\ell^2(\N,\Fc^2)$ is much simpler; many such examples can be found in the literature. As a specific example we mention the operator
	\[T \from L^2(\C,\mu) \to L^2(\C,\mu), \quad (Tf)(z) = f(-z).\]
	$T$ leaves every $\Fc^2_{(k)}$ invariant, which implies that $UTU^{-1}$ is a band operator (with just one diagonal) on $\ell^2(\N,\Fc^2)$. On the other hand,
	\[\widetilde{T}(z) = \sp{Tk_z}{k_z} = \sp{k_{-z}}{k_z} = e^{-2\abs{z}^2} \to 0\]
	as $\abs{z} \to \infty$. So if $T$ was band-dominated, then $T$ restricted to $\Fc^2_1$ would be compact by Theorem \ref{thm:compact_Berezin}, which it obviously is not. Further, maybe more interesting examples are provided in \cite[Example 2]{BaFu}, for instance.
\end{itemize}
\end{ex}

The next theorem now provides a limit operator type characterization of compact operators on $L^2(\C,\mu)$.

\begin{thm} \label{thm:compact_on_L^2}
Let $T \in \Lc(L^2(\C,\mu))$. Then $T$ is compact if and only if $T \in \BDO^2$, $UTU^{-1}$ is band-dominated on $\ell^2(\N,\Fc^2)$, $W_{-z}TW_z \to 0$ strongly as $\abs{z} \to \infty$ and
\begin{equation} \label{eq:compact_on_L^2}
\lim\limits_{k \to \infty} \norm{P_n\Af^kT(\Af^{\dagger})^kP_n} = 0
\end{equation}
for every $n \in \N$.
\end{thm}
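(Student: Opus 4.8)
The plan is to prove both implications, the converse being the substantial one. Throughout I write $Q_N := I - P_N$ and $\tilde T := UTU^{-1}$, so that $UQ_NU^{-1}$ is the orthogonal projection of $\ell^2(\N,\Fc^2)$ onto the layers with index $> N$. For the forward direction, suppose $T$ is compact. Then $T \in \BDO^2$ and $UTU^{-1}$ is band-dominated on $\ell^2(\N,\Fc^2)$ simply because both classes of band-dominated operators are norm-closed algebras containing all compact operators. For the strong convergence $W_{-z}TW_z \to 0$ I would use that the coherent states satisfy $W_zg \to 0$ weakly as $\abs{z}\to\infty$ for every $g$ (the matrix coefficients $\sp{W_zg}{h}$ are cross-ambiguity functions lying in $C_0(\C)$); compactness then upgrades this to $\norm{TW_zg}\to 0$, and $W_{-z}$ is isometric. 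Finally, \eqref{eq:compact_on_L^2} follows by approximating $T$ in norm by a finite-rank operator $F$: the part $T-F$ is controlled by $\norm{T-F}$ since $\Af$ is a contraction and $\Af^\dagger$ an isometry, while for a rank-one summand $\sp{\cdot}{u}v$ of $F$ one has $\norm{P_n\Af^kF(\Af^\dagger)^kP_n} \le \sum \norm{\Af^k u}\norm{v} \to 0$, because $\Af$ is the backward shift and hence $\Af^k u \to 0$ for each fixed $u$.

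For the converse, assume (1)--(4) and decompose $T = P_NTP_N + P_NTQ_N + Q_NTP_N + Q_NTQ_N$. I first claim $P_NTP_N$ is compact for every $N$: it lies in $\Ac^2_N$ since $\BDO^2$ is an algebra containing $P_N$ (Corollary \ref{cor:P_BDO}) and $T$, and since $P_N$ commutes with every $W_z$ (Proposition \ref{prop:A_commutes_with_Weyl_operators}) its limit operators are $(P_NTP_N)_x = P_NT_xP_N$. Condition (3) says exactly that all limit operators $T_x$ with $x\in\beta\C\setminus\C$ vanish, so Theorem \ref{thm:main}(a) gives compactness of $P_NTP_N$. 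Next, band-dominatedness of $\tilde T$ on $\ell^2(\N,\Fc^2)$ yields $\norm{P_NTQ_M}\to 0$ and $\norm{Q_MTP_N}\to 0$ as $M\to\infty$ for fixed $N$, because a band operator $B$ of width $\omega$ satisfies $P_NBQ_M = 0$ once $M - N > \omega$. As each $P_NTP_M$ is compact (it equals $(P_MTP_M)P_N$), letting $M\to\infty$ exhibits $P_NTQ_N$ and $Q_NTP_N$ as norm limits of compacts, hence compact.

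It remains to treat the corner $Q_NTQ_N$, which is the crux. I claim $\norm{Q_NTQ_N}\to 0$ as $N\to\infty$; together with the above this makes $T - Q_NTQ_N$ compact with $\norm{Q_NTQ_N}\to 0$, so $T$ is compact. First I would reformulate \eqref{eq:compact_on_L^2} as: the compression of $\tilde T$ to any fixed block of consecutive layers $[k+1,k+n]$ tends to $0$ in norm as $k\to\infty$ — this is an isometric reinterpretation of $P_n\Af^kT(\Af^\dagger)^kP_n$, since $\Af^k$ and $(\Af^\dagger)^k$ restrict to isometries between the relevant layer blocks. Given $\epsilon>0$, choose a band operator $B$ of width $\omega$ with $\norm{\tilde T - B} < \epsilon$ and partition the layers $> N$ into consecutive windows of size $n > \omega$. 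Then $B$ only connects adjacent windows, so $UQ_NU^{-1}\,B\,UQ_NU^{-1}$ is block-tridiagonal, and the norm of a block-tridiagonal operator is at most $3$ times the supremum of its compressions to pairs of adjacent windows. Each such two-window compression of $\tilde T$ has fixed size $2n$ and position $\ge N$, so by the reformulated condition its norm tends to $0$ as $N\to\infty$. Passing back through the $\epsilon$-approximation gives $\limsup_N \norm{Q_NTQ_N} \le 4\epsilon$, and the claim follows.

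The step I expect to be the main obstacle is precisely this last norm estimate for $Q_NTQ_N$. The naive bound — writing a band operator as a sum of its $2\omega+1$ diagonals and bounding each by its largest entry — produces a factor $2\omega+1$ multiplying $\epsilon$; since the band width $\omega$ must grow as $\epsilon \to 0$, this crude estimate fails to close. The decisive point is the block-tridiagonal reorganization: by choosing the window size larger than the band width, the bad factor $2\omega+1$ is replaced by the \emph{absolute} constant $3$ (the norm of a block-tridiagonal operator is bounded by its block-diagonal part plus twice its block-subdiagonal part, each of which is a sup of its block norms). This is what couples conditions (2) and (4) correctly and makes the corner estimate, and hence the whole converse, go through.
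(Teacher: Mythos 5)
Your proof is correct, and its converse direction takes a genuinely different route from the paper's. The paper works with the set $\Kc(\ell^2(\N,\Fc^2),\Pc)$ of all $K$ with $\norm{K-\hat{P}_nK}+\norm{K-K\hat{P}_n}\to 0$, where $\hat{P}_n = UP_nU^{-1}$: it first shows that your conditions (2) and (4) together are equivalent to $UTU^{-1}\in\Kc(\ell^2(\N,\Fc^2),\Pc)$, and then concludes exactly as in your first step, namely $\norm{T-P_nTP_n}\to 0$ while each $P_nTP_n$ is compact by Theorem \ref{thm:main}(a) since $P_n$ commutes with the Weyl operators. The difference lies in how that equivalence is established. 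For a \emph{band} operator the paper argues diagonal-wise: condition \eqref{eq:compact_on_L^2} forces the entries of each diagonal $m_{g_j}$ to vanish at infinity, hence each $m_{g_j}$, and so the whole band operator, lies in $\Kc(\ell^2(\N,\Fc^2),\Pc)$, this being a closed algebra invariant under multiplication by $V$ and $V^*$; the band-dominated case is then dispatched with the single remark that $\Kc(\ell^2(\N,\Fc^2),\Pc)$ is norm closed.

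That last remark is precisely where your block-tridiagonal argument earns its keep. A band approximant $B$ of an operator satisfying \eqref{eq:compact_on_L^2} inherits that condition only up to $\epsilon$, so the paper's band-operator lemma cannot be applied to $B$ itself; and repairing this by truncating the diagonals of $B$ reintroduces the factor $2\omega+1$ you describe, which blows up as $\epsilon\to 0$. Closedness of $\Kc(\ell^2(\N,\Fc^2),\Pc)$ alone therefore does not obviously bridge the gap from band to band-dominated operators. Your choice of window size $n>\omega$, with the resulting absolute constant $3$ and the bound $\limsup_N\norm{Q_NTQ_N}\leq 4\epsilon$ (where $Q_N = I-P_N$), is exactly the uniform-in-$\omega$ estimate needed here; in effect you prove the statement that the paper asserts by appeal to closedness. (The statement is true and is essentially known in the discrete theory, cf.~\cite{RaRoSi,Lindner}, but by arguments of this windowing type.) So your proof is complete at the one point where the paper's is only sketched, at the cost of being longer: the corner decomposition and the separate treatment of $P_NTQ_N$, $Q_NTP_N$ replace the cleaner algebraic bookkeeping of $\Kc(\ell^2(\N,\Fc^2),\Pc)$. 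Two minor remarks: $P_NTP_M$ equals $P_N(P_MTP_M)$ rather than $(P_MTP_M)P_N$ (the latter is $P_MTP_N$), which is harmless since either product of a bounded and a compact operator gives what you need; and your forward direction (weak null convergence of $W_zg$ via $C_0$ matrix coefficients, plus the finite-rank computation using $\norm{\Af^ku}\to 0$) is a correct self-contained substitute for the paper's citations to \cite{HaSe}.
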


\begin{proof}
Define
\[\Kc(\ell^2(\N,\Fc^2),\Pc) := \set{K \in \Lc(\ell^2(\N,\Fc^2)) : \|K-\hat{P}_nK\| + \|K-K\hat{P}_n\| \to 0 \text{ as } n \to \infty},\]
where $\hat{P}_n := UP_nU^{-1}$ is the orthogonal projection onto $\ell^2(\set{1,\ldots,n},\Fc^2)$. We will first show that $K \in \Kc(\ell^2(\N,\Fc^2),\Pc)$ if and only if $K$ is band-dominated on $\ell^2(\N,\Fc^2)$ and
\begin{equation} \label{eq:compact_on_L^2_2}
\lim\limits_{k \to \infty} \norm{\hat{P}_n(V^*)^kKV^k\hat{P}_n} = 0.
\end{equation}
This follows from a standard limit operator argument; nevertheless, we provide some details for the convenience of the reader. First observe that if $K$ belongs to $\Kc(\ell^2(\N,\Fc^2),\Pc)$, then
\begin{equation} \label{eq:compact_on_L^2_3}
\|K - \hat{P}_nK\hat{P}_n\| \leq \|K - \hat{P}_nK\| + \|\hat{P_n}K - \hat{P_n}K\hat{P}_n\| \to 0
\end{equation}
as $n \to \infty$. $\hat{P}_nK\hat{P}_n$ is obviously a band operator on $\ell^2(\N,\Fc^2)$ and so all $K \in \Kc(\ell^2(\N,\Fc^2),\Pc)$ are band-dominated on $\ell^2(\N,\Fc^2)$. Moreover, since $\hat{P}_nV^k = 0$ for $k \geq n$, we have
\[\lim\limits_{k \to \infty} \norm{(V^*)^kKV^k} = \lim\limits_{k \to \infty} \norm{(V^*)^k(K-K\hat{P}_n)V^k} = 0.\]
In particular, $\lim\limits_{k \to \infty} \norm{\hat{P}_n(V^*)^kKV^k\hat{P}_n} = 0$.

Conversely, assume that $K$ is a band operator on $\ell^2(\N,\Fc^2)$ and satisfies $\norm{\hat{P}_n(V^*)^kKV^k\hat{P}_n} \to 0$ as $k \to \infty$. Write $K$ as $\sum\limits_{j = 0}^{\omega} m_{g_j}V^j + \sum\limits_{j = 1}^{\omega} m_{g_{-j}}(V^*)^j$ for some $g_j \in \ell^{\infty}(\N,\Lc(\Fc^2))$. The condition $\norm{\hat{P}_n(V^*)^kKV^k\hat{P}_n} \to 0$ implies that $\norm{m_{g_j}(k)} \to 0$ for $k \to \infty$, $j \in \set{-\omega,\ldots,\omega}$. It follows
\[\norm{m_{g_j} - \hat{P}_nm_{g_j}} = \norm{m_{g_j} - m_{g_j}\hat{P}_n} \leq \sup\limits_{k \geq n+1} \norm{m_{g_j}(k)} \to 0\]
as $k \to \infty$ and thus $m_{g_j} \in \Kc(\ell^2(\N,\Fc^2),\Pc)$ for each $j \in \set{-\omega,\ldots,\omega}$. As $\Kc(\ell^2(\N,\Fc^2),\Pc)$ is an algebra and invariant under multiplying by $V$ or $V^*$, this shows $K \in \Kc(\ell^2(\N,\Fc^2),\Pc)$. $\Kc(\ell^2(\N,\Fc^2),\Pc)$ is also closed and therefore the same conclusion holds if $K$ is assumed to be a band-dominated instead of a band operator.

We already observed $V = U\Af^{\dagger}U^{-1}$ in the introduction and so \eqref{eq:compact_on_L^2_2} for $K = UTU^{-1}$ is the same as \eqref{eq:compact_on_L^2}. Hence, it suffices to prove that $T$ is compact if and only if $T \in \BDO^2$, $UTU^{-1} \in \Kc(\ell^2(\N,\Fc^2))$ and $W_{-z}TW_z \to 0$ strongly as $\abs{z} \to \infty$. So assume that $T$ is compact. Then $T \in \BDO^2$ is \cite[Theorem 3.7 (d)]{HaSe} and $W_{-z}TW_z \to 0$ as $\abs{z} \to \infty$ follows from \cite[Proposition 4.20]{HaSe}. Moreover, it is clear that $\Kc(\ell^2(\N,\Fc^2))$ contains all compact operators as $\hat{P}_n \to I$ in the strong operator topology. Conversely, assume that $T \in \BDO^2$, $UTU^{-1} \in \Kc(\ell^2(\N,\Fc^2))$ and $W_{-z}TW_z \to 0$ strongly as $\abs{z} \to \infty$. By \eqref{eq:compact_on_L^2_3}, $\norm{T - P_nTP_n} \to 0$ as $n \to \infty$. As $P_n$ and $W_z$ commute (see Proposition \ref{prop:A_commutes_with_Weyl_operators}), we have $W_{-z}P_nTP_nW_z \to 0$ strongly as $\abs{z} \to \infty$. By Theorem \ref{thm:main} (a), $P_nTP_n$ is compact for every $n \in \N$. This shows that $T$ is compact as well, which completes the proof.
\end{proof}

For operators $T$ that satisfy either $\norm{T(I-P_n)} \to 0$ or $\norm{(I-P_n)T} \to 0$ as $n \to \infty$, such as Hankel operators, Theorem \ref{thm:compact_on_L^2} simplifies significantly.

\begin{cor}
Assume that $T \in \Lc(L^2(\C,\mu))$ satisfies either $\norm{T(I-P_n)} \to 0$ or $\norm{(I-P_n)T} \to 0$ as $n \to \infty$. Then $T$ is compact if and only if $T \in \BDO^2$ and $W_{-z}TW_z \to 0$ strongly as $\abs{z} \to \infty$.
\end{cor}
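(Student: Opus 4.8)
The plan is to reduce the ``if'' direction to Theorem \ref{thm:main} (a); the ``only if'' direction is immediate, since a compact operator lies in $\BDO^2$ and satisfies $W_{-z}TW_z \to 0$ strongly by Theorem \ref{thm:compact_on_L^2}. Throughout I use that the hypothesis $W_{-z}TW_z \to 0$ strongly (as $\abs z \to \infty$) says precisely that every limit operator of $T$ vanishes: any net in $\C$ converging to a point $x \in \beta\C \setminus \C$ must escape to infinity, so $T_x = \lim_{z_\gamma \to x} W_{-z_\gamma}TW_{z_\gamma} = 0$.

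Suppose first that $\norm{T(I-P_n)} \to 0$, so that $T = \lim_n TP_n$ in norm; it therefore suffices to show that each $S := TP_n$ is compact. Since $S$ is compact if and only if $S^*S$ is, I pass to $S^*S = P_nT^*TP_n$, which maps $\Fc^2_n$ into itself and lies in $\Ac^2_n$ because $\BDO^2$ is a $C^*$-algebra containing both $T$ and $P_n$ (Corollary \ref{cor:P_BDO}). As $P_n$ commutes with every $W_z$ (Proposition \ref{prop:A_commutes_with_Weyl_operators}), the limit operators of $S^*S$ are $(S^*S)_x = P_n(T^*T)_xP_n$. Since the limit operator map is multiplicative on $\BDO^2$ (strong limits of products of uniformly bounded, strongly convergent nets), $(T^*T)_x = (T^*)_xT_x = 0$ for every $x \in \beta\C\setminus\C$, the last equality because $T_x = 0$. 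Hence every limit operator of $S^*S$ vanishes and Theorem \ref{thm:main} (a) shows that $S^*S$, and therefore $S = TP_n$, is compact.

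The case $\norm{(I-P_n)T} \to 0$ is symmetric: now $T = \lim_n P_nT$ in norm, and I set $S := P_nT$ and argue with $SS^* = P_nTT^*P_n \in \Ac^2_n$, whose limit operators $P_n(TT^*)_xP_n = P_nT_x(T^*)_xP_n$ vanish for the same reason. Thus $P_nT$ is compact for every $n$, and $T$ is compact as a norm limit of compact operators.

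The point requiring care is the asymmetry between the two hypotheses: $P_n$ appears on only one side of $T$, so neither $TP_n$ nor $P_nT$ is itself an operator on $\Fc^2_n$ to which Theorem \ref{thm:main} applies. Passing to the self-adjoint product $S^*S$ (respectively $SS^*$) is exactly what returns the operator to $\Fc^2_n$, while the vanishing of its limit operators rests on the factor $T_x = 0$ absorbing the (a priori unknown) limit operator $(T^*)_x$. Beyond Theorem \ref{thm:main} (a), the only ingredients are that $\BDO^2$ is a $C^*$-algebra, the multiplicativity of the limit operator map, and the commutation $P_nW_z = W_zP_n$.
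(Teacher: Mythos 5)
Your ``only if'' direction and the first case ($\norm{T(I-P_n)} \to 0$) are correct, and there your route is essentially the paper's in streamlined form: the paper also reduces everything to compactness of $P_nT^*TP_n$ on $\Fc^2_n$ via Theorem \ref{thm:main} (a) plus a norm approximation, only packaged through the algebra $\Kc(\ell^2(\N,\Fc^2),\Pc)$ from the proof of Theorem \ref{thm:compact_on_L^2}, whereas you approximate $T$ by $TP_n$ directly. Note, however, \emph{why} that case works: for fixed $g$,
\[\norm{W_{-z}T^*TW_z g} \leq \norm{T}\norm{W_{-z}TW_z g} \to 0,\]
i.e.\ the strongly null factor $W_{-z}TW_z$ is applied \emph{first}, so you never need the net $W_{-z}T^*W_z$ to converge strongly. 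Writing $(T^*T)_x = (T^*)_xT_x$ is an abuse, since the existence of $(T^*)_x$ is not known at that point, but the conclusion $(T^*T)_x = 0$ is sound.

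The second case is \emph{not} symmetric, and this is where your proof has a genuine gap. For $g \in \Fc^2_n$ you now face
\[W_{-z}TT^*W_z g = \left(W_{-z}TW_z\right)\left(W_{-z}T^*W_z g\right),\]
where the strongly null net $W_{-z}TW_z$ acts on the \emph{moving} vectors $h_z := W_{-z}T^*W_z g$; these are bounded but not known to converge, and strong convergence of $W_{-z}TW_z$ to $0$ gives no control over $(W_{-z}TW_z)h_z$ in that situation. Your parenthetical justification of multiplicativity (products of uniformly bounded, \emph{strongly convergent} nets) requires both factors to converge strongly, and strong convergence of a net of operators does not pass to adjoints -- the hypothesis says nothing about $W_{-z}T^*W_z$. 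So ``vanish for the same reason'' is unjustified as written. The repair is available within the paper's framework: limit operators exist as strong limits for every operator in $\BDO^2$ by \cite[Theorem 4.11]{HaSe} (the result behind the paper's definition of $T_x$), in particular for $T^* \in \BDO^2$; since $W_{-z}T^*W_z = (W_{-z}TW_z)^* \to 0$ in the weak operator topology, the existing strong limit $(T^*)_x$ must equal $0$, and then your second case is literally the first case applied to $T^*$. With this inserted, your argument is complete. In fairness, the paper hides the same subtlety behind ``the other case is similar'': its second case needs $W_{-z}TT^*W_z \to 0$ strongly, which requires exactly the same input.
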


\begin{proof}
As $T$ is compact if and only if $T^*T$ is compact, is suffices to consider $T^*T$. We only prove the case when $\norm{T(I-P_n)} \to 0$; the other case is similar. The condition $\norm{T(I-P_n)} \to 0$ implies
\[\norm{(I-P_n)T^*T} = \norm{T^*T(I-P_n)} \to 0\]
and therefore $UT^*TU^{-1} \in \Kc(\ell^2(\N,\Fc^2),\Pc)$. The result now follows by the same arguments as in the last paragraph of the proof of Theorem \ref{thm:compact_on_L^2}.
\end{proof}

The next corollary is now immediate. It can be viewed as a partial generalization of \cite[Theorem 3.1]{HaVi}.

\begin{cor} \label{cor:Hankel_compact}
Let $f \in L^{\infty}(\C)$ and $k,n \in \N$.
\begin{itemize}
	\item[(a)] $H_{f,(k)}$ is compact if and only if $W_{-z}H_{f,(k)}W_z \to 0$ strongly as $\abs{z} \to \infty$.
	\item[(b)] $H_{f,n}$ is compact if and only if $W_{-z}H_{f,n}W_z \to 0$ strongly as $\abs{z} \to \infty$.
\end{itemize}
\end{cor}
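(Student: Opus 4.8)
The plan is to reduce both parts to the preceding corollary by regarding the Hankel operators as operators on all of $L^2(\C,\mu)$ rather than as maps out of a polyanalytic Fock space. First I would pass from $H_{f,(k)} \from \Fc^2_{(k)} \to L^2(\C,\mu)$ to its extension $H_{f,(k)}P_{(k)} = (I-P_{(k)})M_fP_{(k)} \in \Lc(L^2(\C,\mu))$, noting that $H_{f,(k)}$ is compact if and only if $H_{f,(k)}P_{(k)}$ is, since $P_{(k)}$ is the orthogonal projection onto $\Fc^2_{(k)}$ and $H_{f,(k)}$ is the restriction of $H_{f,(k)}P_{(k)}$ to $\Fc^2_{(k)}$. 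The same reduction applies to $H_{f,n}P_n = (I-P_n)M_fP_n$.

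Next I would verify the two hypotheses of the preceding corollary for $H_{f,(k)}P_{(k)}$. Band-dominatedness, $H_{f,(k)}P_{(k)} \in \BDO^2$, is Corollary \ref{cor:band_dominated} (and also follows directly, since $M_f$ and $P_{(k)}$ lie in the $C^*$-algebra $\BDO^2$ by Corollary \ref{cor:P_BDO}). For the decay hypothesis, observe that $\Fc^2_{(k)} \subseteq \Fc^2_n$ gives $P_{(k)}(I-P_n) = 0$ whenever $n \geq k$, so that $H_{f,(k)}P_{(k)}(I-P_n) = 0$ for all $n \geq k$; in particular $\norm{H_{f,(k)}P_{(k)}(I-P_n)} \to 0$. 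The same computation yields $H_{f,n}P_n(I-P_m) = 0$ for $m \geq n$.

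Applying the preceding corollary, $H_{f,(k)}P_{(k)}$ is compact if and only if $W_{-z}H_{f,(k)}P_{(k)}W_z \to 0$ strongly as $\abs{z} \to \infty$. The last step is to translate this strong-null condition back to $\Fc^2_{(k)}$. Since $P_{(k)}$ commutes with the Weyl operators by Proposition \ref{prop:A_commutes_with_Weyl_operators}, we have $W_{-z}H_{f,(k)}P_{(k)}W_z = W_{-z}H_{f,(k)}W_zP_{(k)}$, and because every $h \in \Fc^2_{(k)}$ satisfies $h = P_{(k)}h$, strong convergence to zero on $L^2(\C,\mu)$ is the same as $W_{-z}H_{f,(k)}W_z \to 0$ strongly on $\Fc^2_{(k)}$. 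This proves (a); part (b) follows verbatim with $P_n$ in place of $P_{(k)}$.

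The argument involves no real obstacle: everything rests on the preceding corollary, and the only point needing care is the bookkeeping between the two viewpoints --- the Hankel operator as a map out of $\Fc^2_{(k)}$ versus its extension $(I-P_{(k)})M_fP_{(k)}$ to $L^2(\C,\mu)$ --- together with the observation that the strong-null condition is insensitive to the pre-composed projection, which is exactly what commutation with $W_z$ provides.
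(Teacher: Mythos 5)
Your proposal is correct and follows exactly the route the paper intends: the paper states this corollary is ``immediate'' from the preceding corollary (for operators with $\norm{T(I-P_n)} \to 0$), and your argument simply spells out that immediacy --- identifying $H_{f,(k)}$ with $(I-P_{(k)})M_fP_{(k)} \in \BDO^2$, noting $P_{(k)}(I-P_n)=0$ for $n \geq k$, and using $P_{(k)}W_z = W_zP_{(k)}$ to move between the two viewpoints. The bookkeeping details you supply (compactness equivalence, the vanishing decay condition, and the translation of the strong-null condition via Proposition \ref{prop:A_commutes_with_Weyl_operators}) are precisely the ones the paper leaves to the reader.
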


Of course, we also have the classical characterization of compact Hankel operators in terms of the Berezin transform. Quite suprisingly, the compactness of $H_{f,(k)}$ does not depend on $k$. For $f \in L^{\infty}(\C,\mu)$ we say that $f \in \VMO(\C)$ if
\[\widetilde{\abs{f}^2} - |\tilde{f}|^2 \in C_0(\C).\]

\begin{thm} \label{thm:compact_Hankel}
Let $k,n \in \N$ and $f \in L^{\infty}(\C)$. The following are equivalent:
\begin{itemize}
	\item[(a)] $H_{f,(k)}$ is compact,
	\item[(b)] $H_{f,n}$ is compact,
	\item[(c)] $f \in \VMO(\C)$.
\end{itemize}
In particular, $H_{f,(k)}$ is compact if and only if $H_{\bar f,(k)}$ is compact and $H_{f,n}$ is compact if and only if $H_{\bar f,n}$ is compact.
\end{thm}

\begin{proof}
Assume that $H_{f,(k)}$ is compact. Then, by Corollary \ref{cor:Hankel_compact}, $W_{-z}H_{f,(k)}W_z \to 0$ strongly as $\abs{z} \to \infty$. Let $x \in \beta\C \setminus \C$ and choose a net $(z_{\gamma})$ that converges to $x$. Then $W_{-z_{\gamma}}T_{f,(k)}W_{z_{\gamma}}$ converges strongly to $(T_{f,(k)})_x$. It follows that 
\[f(\cdot + z_{\gamma})g = W_{-z_{\gamma}}M_fW_{z_{\gamma}}g = W_{-z_{\gamma}}T_{f,(k)}W_{z_{\gamma}}g + W_{-z_{\gamma}}H_{f,(k)}W_{z_{\gamma}}g \to (T_{f,(k)})_xg\]
for all $g \in \Fc^2_{(k)}$ as $z_{\gamma} \to x$. In particular, for $g(w) = \bar{w}^{k-1}$, we get
\begin{equation} \label{eq:L^2_convergence}
\lim\limits_{z_{\gamma} \to x} \int_{\C} \abs{f(w+z_{\gamma}) - \psi(w)}^2 \abs{w}^{2k-2} \, \mathrm{d}\mu(w) = 0,
\end{equation}
where $\psi(w) := \frac{((T_{f,(k)})_xg)(w)}{\bar{w}^{k-1}}$ for $w \neq 0$. As $\norm{f(\cdot + z_{\gamma})}_{\infty} \leq \norm{f}_{\infty}$ for all $\gamma$, we also have $\norm{\psi}_{\infty} \leq \norm{f}_{\infty}$. Moreover, $g\psi \in \Fc^2_{(k)}$. By Liouville's theorem for polyanalytic functions, this implies that $g\psi$ is a polynomial in $w$ and $\bar w$ of degree at most $k-1$ (see \cite[Theorem 2.2]{Balk} or \cite[Corollary 1]{Krajkiewicz}). But the only true polyanalytic functions of order $k$ that are also a polynomial of degree $k-1$ are multiples of $g$. It follows that $\psi$ is constant. Furthermore, \eqref{eq:L^2_convergence} implies
\[\lim\limits_{z_{\gamma} \to x} \int_{\C} \abs{f(w+z_{\gamma}) - \psi}^2 \abs{w}^{2k-2} \abs{\phi(w)}^2 \, \mathrm{d}\mu(w) = 0\]
for all bounded functions $\phi$. This shows that the net of multiplication operators $M_{f(\cdot + z_{\gamma})}$ converges strongly to $\psi I$ on the set $\set{g\phi \in L^2(\C,\mu) : \phi \in L^{\infty}(\C,\mu)}$. This set is dense in $L^2(\C,\mu)$ and therefore $M_{f(\cdot + z_{\gamma})} \to \psi I$ strongly on $L^2(\C,\mu)$ as $z_{\gamma} \to x$. This of course implies $(T_{f,(k)})_x = \psi I$ and $(T_{\abs{f}^2,(k)})_x = \abs{\psi}^2 I$, but also $W_{-z_{\gamma}}H_{f,(k')}W_{z_{\gamma}} \to 0$ and $W_{-z_{\gamma}}H_{f,n}W_{z_{\gamma}} \to 0$ in the strong operator topology for any $k',n \in \N$. Corollary \ref{cor:Hankel_compact} therefore reveals that $H_{f,(k')}$ and $H_{f,n}$ are compact. The same argument as above, for $k = 1$, also shows that if $H_{f,n}$ is compact, then $W_{-z_{\gamma}}H_{f,(k')}W_{z_{\gamma}} \to 0$ for any $k' \in \N$. This proves the equivalence of (a) and (b), and that the compactness is independent of $k$ and $n$. For $n = 1$, the equivalence of (b) and (c) is well-known (see e.g.~\cite[Theorems 8.5 and 8.13]{Zhu}), but also follows directly from our previous results. Indeed, $M_{f(\cdot + z_{\gamma})} \to \psi I$ implies
\[\widetilde{\abs{f}^2}(x) - |\tilde{f}(x)|^2 = \abs{\psi}^2 - \abs{\psi}^2 = 0\]
for all $x \in \beta\C^n \setminus \C^n$ via Lemma \ref{lem:constant_limit_operators}, hence $f \in \VMO(\C)$.

Conversely, assume that $f \in \VMO(\C)$. Then the formula $H_{f,1}^*H_{f,1} = T_{|f|^2,1} - T_{\bar f,1}T_{f,1}$ implies $\Bc_1(H_{f,1}^*H_{f,1}) = \widetilde{\abs{f}^2} - \norm{T_{f,1}k_z}^2$. Clearly, $|\tilde{f}| \leq \norm{T_{f,1}k_z}$ and so
\[0 \leq \Bc_1(H_{f,1}^*H_{f,1}) \leq \widetilde{\abs{f}^2} - |\tilde{f}|^2.\]
It follows $\Bc_1(H_{f,1}^*H_{f,1}) \in C_0(\C)$. Theorem \ref{thm:compact_Berezin} now implies that $H_{f,1}^*H_{f,1}$ is compact.
\end{proof}

\section{Remarks and open problems} \label{sec:remarks}

It was quite surprising to the author that the compactness of the Hankel operators $H_{f,(k)}$ is independent of $k$. Indeed, one is tempted to define $\VMO_{(k)}$-spaces consisting of bounded functions $f$ satisfying
\[\Bc_{(k)}(\abs{f}^2) - |\Bc_{(k)}(f)|^2 \in C_0(\C).\]
The same argument as in the proof of Theorem \ref{thm:compact_Hankel} then shows that $H_{f,(k)}$ is compact if and only if $f \in \VMO_{(k)}$. But as it turns out, these $\VMO_{(k)}$-spaces are all the same. This naturally leads to the following question.

\begin{qn}
Is the compactness of Toeplitz operators $T_{f,(k)}$ also independent of $k$? The argument used in the proof of Theorem \ref{thm:compact_Toeplitz} does not quite work for $k \geq 2$, unfortunately.
\end{qn}

In this paper we quite heavily used the functions $\Bc_n$, which we introduced to generalize the Berezin transform. However, as the polyanalytic Fock spaces are reproducing kernel Hilbert spaces, it seems more natural to use the Berezin transform induced by the normalized reproducing kernels. This did not turn out to be very fruitful for our approach. Nevertheless, we pose the following question.

\begin{qn}
Does the more standard Berezin transform $z \mapsto \sp{Tk_{z,n}}{k_{z,n}}$ have any useful properties in connection with compactness or Fredholmness problems on $\Fc^2_n$? It is fairly obvious that if $T \in \Lc(\Fc^2_n)$ is compact, then $\sp{Tk_{z,n}}{k_{z,n}} \to 0$ as $\abs{z} \to \infty$, but we also know that this Berezin transform is not strong enough to characterize compactness in $\Ac^2_n$. Nevertheless, if restricted to Toeplitz operators (with certain symbols) maybe something can still be said.
\end{qn}

We have seen in Corollary \ref{cor:band_dominated} that every Toeplitz operator with bounded symbol is band-dominated. Bauer and Fulsche \cite{BaFu} showed that the algebra of band-dominated operators on $\Fc^2$ is generated by Toeplitz operators. A natural question is therefore:

\begin{qn}
Are $\Ac^2_{(k)}$ and $\Ac^2_n$ also generated by Toeplitz operators for $k,n \geq 2$? If not, can the band-dominated operators in Theorem \ref{thm:compact_Berezin} and related results at least be replaced by an algebra of Toeplitz operators?
\end{qn}


\begin{thebibliography}{}

\bibitem{AbFei} L.D. Abreu, H. Feichtinger: \emph{Function spaces of polyanalytic functions}, In: Harmonic and Complex Analysis and its Applications, 1-38, Trends Math., Birkh\"auser, Basel, 2014.

\bibitem{AxZhe} S. Axler, D. Zheng: \emph{Compact operators via the Berezin transform}, Indiana Univ. Math. J. 47 (1998), no. 2, 387-400.

\bibitem{Balk} M.B. Balk: \emph{Polyanalytic functions and their generalizations}, Complex analysis, I, 195-253, Encyclopaedia Math. Sci. 85, Springer, Berlin, 1997.

\bibitem{BaFu} W. Bauer, R. Fulsche: \emph{Berger-Coburn Theorem, Localized Operators, and the Toeplitz Algebra}, In: Bauer W., Duduchava R., Grudsky S., Kaashoek M. (eds) Operator Algebras, Toeplitz Operators and Related Topics. Oper. Theory Adv. Appl. 279, Birkh\"auser.

\bibitem{BaIs} W. Bauer, J. Isralowitz: \emph{Compactness characterization of operators in the Toeplitz algebra of the Fock space $F^p_{\alpha}$}, J. Funct. Anal. 263 (2012), 1323-1355.

\bibitem{BaCoHa} W. Bauer, L.A. Coburn, R. Hagger: \emph{Toeplitz quantization on Fock space}, J. Funct. Anal. 274 (2018), 3531-3551.

\bibitem{BeCo} C.A. Berger, L.A. Coburn: \emph{Toeplitz operators on the Segal-Bargmann space}, Trans. Am. Math. Soc. 301 (1987), no. 2, 813-829.

\bibitem{Englis} M. Engli\v{s}: \emph{Compact Toeplitz operators via the Berezin transform on bounded symmetric domains}, Integral Equ. Oper. Theory 33 (1999), no. 4, 426-455.

\bibitem{FuHa} R. Fulsche, R. Hagger: \emph{Fredholmness of Toeplitz operators on the Fock space}, Complex Anal. Oper. Theory 13 (2019), 375-403.

\bibitem{Hagger_Ball} R. Hagger: \emph{The essential spectrum of Toeplitz operators on the unit ball}, Integral Equ. Oper. Theory 89 (2017), no. 4, 519-556.

\bibitem{Hagger_BSD} R. Hagger: \emph{Limit operators, compactness and essential spectra on bounded symmetric domains}, J. Math. Anal. Appl. 470 (2019), no. 1, 470-499.

\bibitem{HaSe} R. Hagger, C. Seifert: \emph{Limit operators techniques on general metric measure spaces of bounded geometry}, J. Math. Anal. Appl. 489 (2020), no. 2, 124180, 36 pp.

\bibitem{HaVi} R. Hagger, J.A. Virtanen: \emph{Compact Hankel operators with bounded symbols}, J. Operat. Theor. 86 (2021), no. 2, 317-329.

\bibitem{IsMiWi} J. Isralowitz, M. Mitkovski, B. Wick: \emph{Localization and compactness in Bergman and Fock spaces}, Indiana Univ. Math. J. 64 (2015), 1553-1573.

\bibitem{Kolossov} G.V. Kolossov: \emph{Sure les probl\`emes d'\'elasticit\'e \`a deux dimensions}, C. R. Acad. Sci. 146 (1908), 522-525.

\bibitem{Krajkiewicz} P. Krajkiewicz: \emph{The Picard theorem for multianalytic functions}, Pacific J. Math. 48 (1973), 423-439.

\bibitem{Lindner} M. Lindner: \emph{Infinite Matrices and Their Finite Sections}, Birkh\"auser Verlag, Basel, Boston, Berlin, 2006.

\bibitem{LuSk} F. Luef, E. Skrettingland: \emph{A Wiener Tauberian theorem for operators and functions}, J. Funct. Anal. 280 (2021), 108883, 44 pp.

\bibitem{MiWi} M. Mitkovski, B. Wick: \emph{A reproducing kernel thesis for operators on Bergman-type function spaces}, 371-386, Oper. Theory Adv. Appl. 117, Birkh\"auser, Basel, 2000.

\bibitem{Muskhelishvili} N.I. Muskhelishvili: \emph{Some basic problems of the mathematical theory of elasticity. Fundamental equations, plane theory of elasticity, torsion and bending}, Translated by J.R.M. Radok, P. Noordhoff, Ltd., Groningen, 1963.

\bibitem{RaRoSi} V.S. Rabinovich, S. Roch, B. Silbermann: \emph{Limit Operators and Their Applications in Operator Theory}, Operator Theory: Advances and Applications, 150, Birkh\"auser, Basel, 2004.

\bibitem{RoVa} G. Rozenblum, N. Vasilevski: \emph{Toeplitz operators in polyanalytic Bergman typ spaces}, In: Functional analysis and geometry: Selim Grigorievich Krein centennial, 273-290, Contemp. Math., 733, Amer. Math. Soc., Providence, RI, 2019.

\bibitem{Vasilevski} N. Vasilevski: \emph{Poly-Fock spaces}, Differential operators and related topics, Vol. I (Odessa, 1997), 371-386, Oper. Theory Adv. Appl. 117, Birkh\"auser, Basel, 2000.

\bibitem{Wolf} H. Wolf: \emph{Toeplitz operators on polyanalytic functions and Klee's combinatorial identity}, Math. Nachr. 166 (1994), 5-15.

\bibitem{Xia} J. Xia: \emph{Localization and the Toeplitz algebra on the Bergman space}, J. Funct. Anal. 269 (2015), 781-814.

\bibitem{XiaZhe} J. Xia, D. Zheng: \emph{Localization and Berezin transform on the Fock sapce}, J. Funct. Anal. 264 (2013), 97-117.

\bibitem{Zhu} K. Zhu: \emph{Analysis on Fock spaces}, Grad. Texts in Math. 263, Springer, New York, 2012.

\end{thebibliography}
\end{document}